\title[Ranks of mapping tori via the curve complex]{Ranks of mapping tori via the curve complex}
\author{Ian Biringer and Juan Souto}
\providecommand{\U}[1]{\protect\rule{.1in}{.1in}}
\newcommand{\CE}{\mathcal E}		
		\newcommand{\CN}{\mathcal N}
\theoremstyle{plain}
\newtheorem{theorem}{Theorem}[section]
\newtheorem*{namedtheorem}{\theoremname}
\newcommand{\theoremname}{testing}
\newtheorem*{rep@theorem}{\rep@title}
\newcommand{\newreptheorem}[2]{%
\newenvironment{rep#1}[1]{%
 \def\rep@title{#2 \ref{##1}}%
 \begin{rep@theorem}}%
 {\end{rep@theorem}}}
\newtheorem{claim}[theorem]{Claim}
\newtheorem{corollary}[theorem]{Corollary}
\newtheorem{lemma}[theorem]{Lemma}
\newtheorem{fact}[theorem]{Fact}
\newtheorem*{lemma*}{Lemma}
\newtheorem{proposition}[theorem]{Proposition}
\newtheorem{remark}[theorem]{Remark}
\theoremstyle{definition}
\newtheorem{definition}[theorem]{Definition}
\newtheorem*{definition*}{Definition}
\newcommand{\length}{\mathrm{length}}
\newcommand{\BZ}{\mathbb{Z}}
\newcommand{\BR}{\mathbb{R}}
\newcommand{\BH}{\mathbb{H}}
\newcommand{\rank}{\mathrm{rank}}
\newcommand{\diameter}{\mathrm{diam}}
\newcommand{\circumference}{\mathrm{circ}}
\newcommand{\into}{\hookrightarrow}
\newcommand{\isometries}{\mathrm{Isom}}
\begin{document}
\maketitle
\begin{abstract}
We show that if $\phi$ is a homeomorphism of a closed, orientable surface of genus $g$, and $\phi$ has large translation distance in the curve complex, then the fundamental group of the mapping torus $M_\phi$ has rank $2g+1$.
\end{abstract}

\section{Introduction}
\label {introduction}

Let $\phi : S \to S $ be a homeomorphism of a closed, orientable surface $S$ of genus $g$.  The \emph {mapping torus} of $\phi $ is the $3 $-manifold $M_\phi$ defined by
$$M_\phi = S \times [0, 1] \, / \sim, \ \ (x,0)\sim(\phi(x),1).$$

Our interest is in the minimal number of elements needed to generate the fundamental group of $M_\phi $; this is called the \emph {rank} of $\pi_1 M_\phi$.  
As we have
$$1 \longrightarrow \pi_1 S \longrightarrow \pi_1 M_\phi \longrightarrow \BZ \longrightarrow 1,$$
a generating set for $\pi_1 M_\phi$ can be constructed by adding to a generating set for $\pi_1 S $ any element of $\pi_1 M_\phi$ the projects to $1\in \BZ$.  Therefore, 
$$\rank(\pi_1 M_\phi) \leq \rank(\pi_1 S) + 1 = 2g+1.$$
The mapping torus of the identity map is $S \times S^1$, in which case the above is  an equality.   However, there are also examples where the inequality is strict.  For instance, if $g=1$ then $\rank (\pi_1 M_\phi)=2 < 3$ when $\phi $ is the map
$$\phi : T^2 \longrightarrow T^2, \ \ \ \phi = \begin {pmatrix} 0 & 1 \\ 1 & 0 \end {pmatrix}. $$ For although $\pi_1 M_\phi = \BZ^2 \rtimes_\phi \BZ$ is generated by $(1,0,0)$, $(0,1,0)$ and $(0,0,1)$, the second is the conjugate of the first by the third.   Similar examples can also be constructed by hand in higher genus.

For a more dramatic example, suppose $M_\phi$ is hyperbolic and $b_1(M_\phi) \geq 2$.  By work of Thurston \cite{Thurstonnorm}, there are surfaces $S'$ with arbitrarily large genus and homeomorphisms $\phi' : S' \to S'$ such that $M_\phi \cong M_{\phi'}$.  So, if the genus $g'$ of $S'$ is large, $\rank(\pi_1 M_{\phi'})=\rank(\pi_1 M_\phi)$ will be much less than $2g'+1$.

\begin {theorem}\label {main}
When $g\geq 2 $, there is some $L = L (g) $ such that if $\phi : S \longrightarrow S$ has translation length at least $L $ in the curve complex $C(S)$, then $$\rank (\pi_1 M_\phi) = 2g+1 $$
and all $2g+1$-element generating sets for $\pi_1 M_\phi $ are Nielsen equivalent.
\end {theorem}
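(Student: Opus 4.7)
The upper bound $\rank(\pi_1 M_\phi) \leq 2g+1$ is already given by the exact sequence in the introduction, so the content of the theorem is the matching lower bound together with the Nielsen equivalence statement. My plan is to argue by contradiction, combining carrier graph techniques for hyperbolic $3$-manifolds with the curve complex hypothesis on $\phi$.

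The first step is to observe that for $L$ large enough $\phi$ is pseudo-Anosov, so by Thurston's hyperbolization $M_\phi$ admits a hyperbolic metric, and by Minsky's bounded geometry theorem (with the Brock--Canary--Minsky model manifold machinery) large curve complex translation length forces a uniform lower bound $\epsilon_0 = \epsilon_0(g) > 0$ on the injectivity radius of $M_\phi$, with each fiber realizable as a pleated surface sitting in the thick part. Next, suppose for contradiction that $\pi_1 M_\phi$ is generated by $n \leq 2g$ elements, and realize this generating set by a minimal-length carrier graph $X \hookrightarrow M_\phi$ with $n$ edges. Standard carrier graph estimates in closed hyperbolic $3$-manifolds with bounded geometry give $\length(X) \leq K(g, \epsilon_0)$.

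The crux is to extract from the short carrier graph $X$ a bounded-diameter shadow in the curve complex $C(S)$ that is coarsely $\phi$-equivariant. Concretely, lift $X$ to the infinite cyclic cover $\tilde M_\phi \cong S \times \BR$ and, for each integer $k$, record a bounded-length essential simple closed curve $\alpha_k$ on the fiber $S \times \{k\}$ that interacts with the lifted graph in a prescribed way --- for instance, meets or is filled by the projection to $S \times \{k\}$ of a short loop in the lift of $X$. The $\alpha_k$ are well-defined up to bounded error in $C(S)$, and since $X$ has bounded length, the set $\{\alpha_k\}_{k \in \BZ}$ has $C(S)$-diameter bounded in terms of $\length(X)$ only, while consecutive terms differ by the action of $\phi$. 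Choosing $L$ larger than this diameter bound contradicts the translation length hypothesis. Nielsen equivalence is obtained by the same framework: two $(2g+1)$-element generating sets produce carrier graphs whose shadows in $C(S)$ agree up to bounded error, and pushing this back through the fibration puts both carrier graphs in a normal form depending only on the monodromy, from which a finite sequence of Nielsen moves between the generating sets can be read off.

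The main obstacle will be carrying out the shadow-extraction step rigorously. Carrier graphs do not come equipped with distinguished curves on any fiber, so the projection from a short graph in $M_\phi$ to $C(S)$ must be simultaneously well-defined up to bounded error and coarsely equivariant under the $\BZ$-action on the cyclic cover. This is the place where the translation length hypothesis must be converted into genuine topological information about the generating set, and it will likely require interleaving carrier graph geometry, the pleated-fiber picture, and Masur--Minsky subsurface projections; the constant $L = L(g)$ will be determined by the combined bounded-error constants of these constructions.
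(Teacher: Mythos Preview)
Your proposal has a genuine gap at the very first geometric step. You assert that large curve complex translation length of $\phi$ forces a uniform lower bound $\epsilon_0=\epsilon_0(g)$ on the injectivity radius of $M_\phi$, invoking ``Minsky's bounded geometry theorem.'' That theorem says the opposite thing: $M_\phi$ is $\epsilon$-thick if and only if \emph{all subsurface projections} along the hierarchy associated to $\phi$ are bounded, and this is entirely independent of $\tau(\phi)$, which only measures top-level curve complex distance. One can have pseudo-Anosovs with $\tau(\phi)$ as large as you like while also having huge annular (or other proper subsurface) projections, producing arbitrarily short geodesics in $M_\phi$. Indeed, the paper states explicitly that handling the thin parts is the new content here relative to \cite{Biringergeometry,Soutorank}; under your claimed bounded geometry the result was already known. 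Once the $\epsilon_0$-thick assumption fails, your next step --- the uniform bound $\length(X)\leq K(g,\epsilon_0)$ on a minimal carrier graph --- also fails, since such bounds genuinely require an injectivity radius lower bound.

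The paper's route avoids this by never asking for short carrier graphs in the hyperbolic metric. Instead it electrifies the $\epsilon$-thin part and shows (Lemma~\ref{thickcircumference}) that $\tau(\phi)$ is comparable to the $\epsilon$-electric circumference of $M_\phi$. It then uses the \emph{relative} length filtration of Proposition~\ref{chainsprop} on a minimal carrier graph, and the key Lemma~\ref{hullbounds} bounds $d_\epsilon$-diameters of successive subgraphs as long as their $\pi_1$-images sit with infinite index in the fiber group. This forces some subgraph $Y$ with $f_*(\pi_1 Y)$ of finite index in $\pi_1 S$; a rank count gives $f_*(\pi_1 Y)=\pi_1 S$, and Nielsen equivalence then follows immediately from Zieschang's theorem on minimal generating sets of surface groups. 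Your ``shadow in $C(S)$'' idea and the vague normal-form argument for Nielsen equivalence would, even if the bounded geometry issue were fixed, need to be replaced by something like this structural conclusion about a subgraph hitting the fiber group exactly.
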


The \emph {curve complex} $C(S)$ is the graph whose vertices are isotopy classes of essential simple closed curves on $S$ and whose edges connect vertices with disjoint representatives.  $C(S)$ has a path metric in which all edges have length $1$, and the \emph {translation length} of $\phi$ is defined by $$\tau(\phi)=\inf_{v\in C(S)} d(v,\phi(v)).$$

Two $n $-element generating sets for a group $G $ are \emph {Nielsen equivalent} if the associated surjections $F_n \longrightarrow G$ differ by precomposition with a free group automorphism. As $Aut(F_n)$ is generated by Nielsen transformations, this is equivalent to requiring that our two generating sets for $G $ are related by a sequence of the following moves:
\begin {enumerate}
\item if $a\neq b$ are generators, replace $a$ with $ab$,
\item if $a\neq b$ are generators, switch their places in the ordering.
\item if $a$ is a generator, replace it with $a^{-1}$.
\end {enumerate}

Note that the choice to replace $a$ with $ab$ rather than $ba$ does not matter, since the two options are conjugate by an inversion (move 3).

\vspace{2mm}

Theorem \ref {main} is a strengthening of previous results of the authors \cite {Biringergeometry}, \cite{Soutorank}.  The proof outline is similar to the argument given in \cite {Biringergeometry}, except that more machinery is required to deal with the thin parts of the mapping tori, which were excluded in the previous work.  We would like to stress, however, that this paper is the first of \cite{Biringergeometry,Biringerfiniteness,Soutorank} to harness the power of carrier graphs when the injectivity radius is not bounded away from zero.

The \emph {Heegaard genus} of a closed, orientable $3$-manifold $M$ is the minimal genus of a \emph{Heegaard splitting}, a surface $S\subset M$ that divides $M $ into two handlebodies.  Rank is always at most Heegaard genus, since the fundamental group of each handlebody surjects onto $\pi_1 M$, and Waldhausen conjectured that they are always equal.  Although this is false \cite{Boileauheegaard,Lirank,Schultensgeometric}, rank and genus still appear to be very related; for instance, in all known examples the ratio is bounded.  

Bachman-Schleimer~\cite{Bachmansurface} showed in 2005 that when $\tau(\phi)\geq 4g$, the Heegaard genus of $M_\phi$ is $2g+1$ and there is a unique minimal genus splitting. So, our theorems combine to say that rank is equal to genus for mapping tori $M_\phi$ with $\tau(\phi)$ large. Moreover, since minimal generating sets and splittings are unique in both cases, this says that every minimal size generating set for $M_\phi$ is \emph {geometric}, i.e.\ it can be realized by a graph in $M_\phi$ that thickens to a Heegaard splitting.
Note that unlike Bachman-Schleimer, we lack an explicit bound for the translation length. While some bookkeeping would make our constant explicit, the resulting bound would be large and uninteresting.

Theorem \ref{main} is still interesting when $S=T^2$. Here, homotopy classes of homeomorphisms $\phi : T^2 \longrightarrow T ^ 2$ correspond to matrices $A \in GL_2\BZ $, and
$$\pi_1 M_\phi=\BZ ^ 2\rtimes_A\BZ . $$
Using the Cayley-Hamilton theorem, one can show $\BZ^2 \rtimes_A \BZ$ has rank $2$ if and only if there is some $v\in \BZ^2$ such that $\langle v,A  v\rangle = \BZ^2$, see e.g.\ \cite[Lemma 3.1]{Levittrank}. 
The curve complex $C(T^2)$ is the \emph {Farey graph}, whose vertices are primitive elements of $\BZ ^ 2 $ up to negation and whose edges connect generating pairs.  So, if the translation distance $\tau(A)$ of $A \circlearrowright C(T^2)$ is at least $2 $, then $\rank(G_A)=3.$ Note that the analogous statement about Heegard genus  is an earlier theorem of Cooper-Scharlemann~\cite{Cooperstructure}.

The theorem is still true when $S$ has punctures, i.e.\ $\rank (\pi_1 M\phi)=b_1(M_\phi)+1$ when $\phi$ has large translation distance, and the argument goes through with minimal changes.  In Lemma \ref{thickcircumference}, only Margulis tubes must be electrified, while the cusp neighborhoods are ignored. Instead of using Zieschang's theorem \cite {Zieschanguber} on Nielsen equivalence of generating sets for $\pi_1 S$, one uses the fact that minimal size generating sets for a free group are free bases. Some of the work in \S \ref{lemmasection} becomes more complicated, but this is mostly bookkeeping.

This paper was partly motivated by a conversation with Igor Rivin, who asked us whether a `random' hyperbolic mapping torus has the expected rank.  Maher \cite{Maherlinear} has shown that certain random walks in the mapping class group of a surface $S$ make linear progress in the complex of curves.  So, combining Maher's full statement with our theorem gives:

\begin {corollary}\label {random}
Suppose that $S$ is a closed, orientable surface with genus $g\geq 2$ and let $\mu$ be a probability distribution on $\mathrm{Mod}(S)$ with finite first moment, whose support is bounded in the relative metric and generates a non-elementary subgroup.  If $\omega_n$ is a $\mu$-random walk on $\mathrm{Mod}(S)$, then the probability that $\rank (\pi_1 M_{\omega_n}) = 2g+1$ converges to $1$ as $n\to \infty$.

\end {corollary}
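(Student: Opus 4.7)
The plan is to compose Theorem \ref{main} with Maher's random walk theorem directly, in two steps.

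First, I would invoke \cite{Maherlinear} to conclude that, under the stated hypotheses on $\mu$, there exists a constant $c = c(\mu) > 0$ such that
$$\Pr\bigl[\, \tau(\omega_n) \geq c\, n \, \bigr] \longrightarrow 1 \qquad \text{as } n \to \infty,$$
so that $\tau(\omega_n) \to \infty$ in probability in the curve complex.

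Second, I would apply Theorem \ref{main}: let $L = L(g)$ be the threshold constant supplied by that theorem; for $n \geq L/c$ we have $\{\tau(\omega_n) \geq cn\} \subseteq \{\tau(\omega_n) \geq L\}$, and hence $\Pr[\tau(\omega_n) \geq L] \to 1$. On this event Theorem \ref{main} guarantees $\rank(\pi_1 M_{\omega_n}) = 2g+1$, which is exactly the claim of the corollary.

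The only delicate point is extracting a lower bound on the translation length $\tau(\omega_n) = \inf_v d(v, \omega_n v)$, rather than merely on the orbit displacement $d_{C(S)}(v_0, \omega_n v_0)$, from \cite{Maherlinear}. The headline result of that paper gives only the latter, and a priori the translation length could be much smaller. The gap is closed by the complementary fact that a typical $\omega_n$ is pseudo-Anosov and therefore acts loxodromically on the Gromov-hyperbolic complex $C(S)$, so that its translation length and asymptotic orbit displacement rate agree up to bounded error. This promotion is what the authors absorb into ``Maher's full statement,'' and is the main (indeed only) obstacle in an otherwise immediate reduction.
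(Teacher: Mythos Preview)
Your proposal is correct and matches the paper's approach exactly: the paper offers no formal proof, merely stating that ``combining Maher's full statement with our theorem gives'' the corollary. Your two-step reduction (Maher for linear growth of $\tau(\omega_n)$, then Theorem~\ref{main}) is precisely this combination, and your identification of the one subtle point---that one needs a lower bound on translation length rather than orbit displacement, which is indeed contained in Maher's work---is a useful clarification of what the paper leaves implicit.
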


The `relative metric' on $\mathrm{Mod}(S)$ is a metric under which the mapping class group is quasi-isometric to the complex curves, and the condition of bounded support is equivalent to requiring that all elements of the support translate a given base point in the curve complex by a bounded amount. We refer the reader to Maher's paper \cite{Maherlinear} for more details.  In particular, though, a simple random walk on a Cayley graph for $\mathrm{Mod}(S)$ satisfies the assumptions above.

We refer the reader to Rivin's article \cite {Rivinstatistics} for a number of similar analyses of invariants of random hyperbolic $3$-manifolds, both in the fibered context as above and in the random Heegaard splitting model of Dunfield-Thurston \cite{Dunfieldfinite}.

\subsection*{Organization of the paper}

Section \ref{proof} contains a top-level proof of Theorem \ref {main}. Many of the important ingredients in the proof are stated here, with their proofs deferred to later sections. Section \ref{preliminaries} presents the necessary background, which includes convex cores, ends, and simplicial hyperbolic surfaces. Section \ref{circumferencesection} relates the `electric circumference' of a mapping torus $M_\phi$ to the translation distance $\tau(\phi)$. This statement is implicit in Minsky's program to build bilipschitz models for $3$-manifolds \cite{Brockclassification,Minskybounded} and appears in a paper of Bowditch \cite{Bowditchending}, but we believe that the proof here is easier to read. Section \ref{carriergraphsection} develops the theory of \emph {carrier graphs}, the main technical tool of the paper. Finally, the proof of an important lemma is presented in Section \ref{lemmasection}.

\subsection*{Acknowledgments}
Thanks are due to Igor Rivin for requesting that we write this article, and to Robert Tang for pointing out a reference for Theorem 4.1.  The referee also contributed enormously to the paper, improving both its readability and accuracy. The first author was partially supported by NSF grant DMS-1308678.

 \section{The proof of Theorem \ref {main}}

Let $S $ be a closed, orientable surface with genus $g\geq 2$, fix a homeomorphism $\phi : S \longrightarrow S $ and let $ M_\phi$ be the associated mapping torus.  We will assume in this section that $\phi$ is pseudo-Anosov, since finite order and reducible $\phi$ can be excluded from Theorem \ref {main} by taking $L$ large enough.

 By Thurston's hyperbolization theorem  \cite{Thurstonhyperbolic2}, $M_\phi$ admits a hyperbolic metric.   Fix some small $\epsilon>0$ and let $d_{\epsilon} $ be the path pseudo-metric on $M_\phi $ obtained by   {electrifying} the $\epsilon $-thin part of $M_\phi $ (see \S\ref {elecgeom}).  The \emph {$\epsilon $-electric circumference} of $M_\phi $ is the smallest $d_\epsilon $-diameter of a loop in $M_\phi$ that projects nontrivially to $\pi_1 S ^ 1 $.

\begin {lemma}[Electric circumference $\approx \tau(\phi)$]
\label{thickcircumference}
For small $\epsilon=\epsilon(g) >0 $, there is some constant $k=k(g,\epsilon)$ such that $\frac 1k \cdot \tau (\phi) \,\leq \, \circumference_{ \epsilon} (M_\phi) \leq k \cdot \tau (\phi). $
\end {lemma}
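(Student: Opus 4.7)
My plan is to prove the two inequalities separately. The main tool is a coarsely Lipschitz shortest-curve map $\sigma$: for each $x \in M_\phi$, let $\sigma(x) \in C(S)$ denote (a choice of) shortest essential simple closed curve in a hyperbolic realization of the fiber through $x$. The technical backbone of the argument is that $\sigma:(M_\phi, d_\epsilon)\to(C(S), d_{C(S)})$ is coarsely Lipschitz, with constants depending only on $g$ and $\epsilon$.

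\emph{Lower bound.} Let $L\subset M_\phi$ be a loop of $d_\epsilon$-diameter $d$ projecting to the element of winding number $m\neq 0$ in $\pi_1(S^1)$. Lift $L$ to an arc $\tilde L$ in the infinite cyclic cover $\tilde M_\phi$ joining a point $x_0$ to $T^m x_0$, where the generating deck translation $T$ acts on fibers as $\phi$. Coarse Lipschitz-ness of $\sigma$ along $\tilde L$ gives $\mathrm{diam}_{C(S)}(\sigma(\tilde L))\leq Kd$. Since $\sigma(T^m x_0) = \phi^m\sigma(x_0)$ up to bounded error, this yields $d_{C(S)}(\sigma(x_0),\phi^m\sigma(x_0))\leq Kd$, so $|m|\tau(\phi)=\tau(\phi^m)\leq Kd$ and hence $\tau(\phi)\leq K\cdot\circumference_\epsilon(M_\phi)$.

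\emph{Upper bound.} Conversely, pick $\gamma\in C(S)$ realizing $d_{C(S)}(\gamma,\phi\gamma)\leq\tau(\phi)+1$, and a geodesic edge-path $\gamma=\gamma_0,\ldots,\gamma_n=\phi\gamma$ in $C(S)$. Place each $\gamma_i$ in the fiber at height $i/n$; since $\gamma_i,\gamma_{i+1}$ are disjoint on $S$, they can be joined by a bridge arc in $M_\phi$ whose $d_\epsilon$-diameter is bounded by some $C=C(g,\epsilon)$. Concatenating these $n$ bridges yields a loop projecting to $\pm 1\in\pi_1(S^1)$ of $d_\epsilon$-diameter at most $Cn+O(1)$, so $\circumference_\epsilon(M_\phi)\leq k\,\tau(\phi)$.

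\emph{Main obstacle.} The entire argument rests on the coarse Lipschitz-ness of $\sigma$, and on the analogous bounded-diameter bridging in the upper bound. Outside the thin parts these properties follow from Mumford compactness: a bounded hyperbolic displacement changes the shortest curve of $C(S)$ by at most a bounded amount. Inside a Margulis tube, however, $d_\epsilon$ is degenerate while hyperbolic distance can be arbitrarily large, so one must show that each thin tube of the fibered $3$-manifold corresponds to a single curve class that persists as the shortest curve across a range of adjacent fibers---thereby making $\sigma$ change by only a bounded amount when a path enters and exits a tube. This persistence is the structural input from the Minsky and Brock--Canary--Minsky analysis of thin parts of surface bundles, and is the new ingredient that lets the argument extend beyond the bounded-geometry setting of \cite{Biringergeometry}.
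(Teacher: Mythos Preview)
Your outline is essentially the paper's own strategy: both arguments reduce the lemma to a coarse comparison between $\epsilon$-electric distance in (the cyclic cover of) $M_\phi$ and curve-complex distance, which the paper packages as its Theorem~\ref{models}. Two points are worth flagging, since they are where your sketch diverges from how the paper actually executes the plan.

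\textbf{Upper bound.} Your instruction to ``place each $\gamma_i$ in the fiber at height $i/n$'' does not, by itself, give bridges of bounded $d_\epsilon$-diameter: an arbitrary topological fiber in $M_\phi$ need not have bounded electric diameter, and two disjoint curves on $S$ placed on distant fibers need not be close. The paper's fix is to realize each disjoint pair $\gamma_i\cup\gamma_{i+1}$ by a \emph{simplicial hyperbolic surface} in the fiber class (Lemma~\ref{existence}); Gauss--Bonnet bounds its area, hence its electric diameter (Fact~\ref{boundeddiameter}), and consecutive surfaces overlap along the geodesic $\hat\gamma_{i+1}$. This is the concrete mechanism behind your ``bridge arcs''.

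\textbf{Lower bound and the thin parts.} The coarse Lipschitz property of your map $\sigma$ on thick parts is exactly what the paper imports as a black box from Brock--Bromberg \cite[Theorem~7.16]{Brockgeometric}; you should cite that rather than reprove it. For the thin parts, you invoke the full Minsky/BCM structure theory, but this is more than is needed. The paper's argument is elementary: choose $\epsilon$ much smaller than the Margulis constant $\epsilon_0$, and note that any two bounded-length geodesics found near the entry and exit of an $\epsilon$-thin segment must both lie in a single $\epsilon_0$-Margulis tube, hence be homotopic as simple closed curves. No model-manifold machinery is required.

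Finally, one subtlety you pass over: when you lift your loop $L$ to the cyclic cover $\hat M_\phi$, the $\epsilon$-electric length of the lift can exceed that of $L$, since $\epsilon$-thin parts of $M_\phi$ whose cores are not in $\pi_1 S$ do not lift to $\epsilon$-thin parts of $\hat M_\phi$. The paper handles this by observing that if such an $\epsilon$-short transverse loop exists, one can simply take \emph{it} as the loop $\gamma$, and then the absolute (hence electric) length of the lift is already short.
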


Instead of electric circumference, one could also consider either the $d_\epsilon $-diameter of $M_\phi$ or the smallest $d_\epsilon $-length of a loop projecting nontrivially to $\pi_1 S ^ 1 $.   However, the arguments of \S  \ref {circumferencesection} imply that both of these are equal to electric circumference  up to an error depending on $g,\epsilon $.  We use the definition of electric circumference above since it best fits our needs later in the section.

Lemma \ref {thickcircumference} follows from a theorem of Bowditch \cite[Theorem 5.4]{Bowditchending}, which we state in \S \ref {circumferencesection}.  It is also a moral starting point for Minsky's program \cite {Minskyclassification, Minskykleinian, Minskybounded} for building bilipschitz models for hyperbolic $3$-manifolds.

\vspace{1mm}

The goal now is to show that when the $\epsilon $-electric circumference of $M_\phi $ is large, the fundamental group $\pi_1 M_\phi $ has rank $2g+1$ and has a single Nielsen equivalence class of $2g+1$-element generating sets.  To do this, we need a tool that allows us to view generating sets for $\pi_1 M$ geometrically.

A \emph{carrier graph} in a hyperbolic manifold $M $ is a map $f : X \longrightarrow M$, where $X$ is a connected finite graph, that induces a surjection on $\pi_1 $.  Two carrier graphs $f: X\longrightarrow M $ and $g: Y\longrightarrow M $ are \emph {equivalent} if there is a homotopy equivalence $h:X\longrightarrow Y $ such that $g\circ h$ is homotopic to $f $.
Note that any generating set for $\pi_1 M $ gives, for instance, a carrier graph whose domain is a wedge of circles.  This map induces a bijection between Nielsen equivalence classes of generating sets (see \S \ref {introduction}) and equivalence classes of carrier graphs as above.

  As long as a carrier graph $f$ is rectifiable, the hyperbolic metric on $M$ pulls back to a path (pseudo)-metric on $X$, which we use to measure the lengths of the edges of $X$.  We'll say that $f $ has \emph {minimal length} if its total edge length is at most that of any other equivalent carrier graph $Y \longrightarrow M$. Arzela-Ascoli's theorem implies that any equivalence class of carrier graphs has a minimal length representative.

White \cite {Whiteinjectivity} has shown that the geometry of a minimal length carrier graph is  well-behaved: for instance, $X$ is trivalent and the edges of $X$ map to geodesic segments in $ M$ that connect at $2\pi/3$-angles.  Using these features, he showed that any minimal length carrier graph has a cycle with total edge length less than some constant depending on $\rank(\pi_1 X)$.  An even stronger statement was proved in \cite{Biringergeometry}: any minimal length carrier graph $f :X \to M$ is filtered by subgraphs each of which has bounded length `relative to' the previous subgraph.

\begin{proposition}[Chains of Bounded Length, see \S \ref {relativelengthsection}]
\label{chainsprop}
Fixing $\epsilon>0 $, there is an $L=L(\epsilon,n) $ such that if $M$ is a hyperbolic $n$-manifold such that $\pi_1 M$ does not split is a free product, e.g.\ if $M$ is closed, and $f: X \to M$ is a minimal length carrier graph, there is a sequence of (possibly disconnected) subgraphs $\emptyset = Y_0 \subset Y_1 \subset \ldots \subset Y_k = X$ such that $\length_{Y_i,\epsilon}(Y_{i+1}) < L$ for all $i$.
\end{proposition}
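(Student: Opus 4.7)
The plan is to construct the filtration inductively, building $Y_{i+1}$ from $Y_i$ by adding a single short piece (in the relative electric pseudo-metric) at each step. Starting from $Y_0 = \emptyset$, I would first apply a version of White's \cite{Whiteinjectivity} short cycle lemma -- adapted to the $\epsilon$-electrified setting -- to produce $Y_1$ as a short cycle (or short connecting arc) in $X$. For the inductive step, I would collapse $Y_i \subset X$ and the $\epsilon$-thin part of $M$, and then seek a short relative arc or cycle in the electrified quotient that lifts to an additional piece of $X$ forming $Y_{i+1}$. Crucially, the bound on the newly added length must be universal in $\epsilon$ and $n$, independent of the rank of $X$, which is what makes the filtration a genuine ``chain of bounded length.''

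The mechanism for the universal bound is White's structure theorem for minimal carrier graphs: $X$ is trivalent and its edges map to geodesic segments meeting at $2\pi/3$ angles. Using these local features together with the minimality of $f$, I would argue as follows. Suppose some relative arc or loop in $X \setminus Y_i$ whose image lies in the $\epsilon$-thick part of $M$ and avoids a neighborhood of $f(Y_i)$ has very large length. Then by a pigeonhole/Margulis-lemma argument in the thick part, this geodesic must come close to returning to itself (or close to $f(Y_i)$), and a localized edge-slide or edge-swap would produce a strictly shorter equivalent carrier graph, contradicting minimality. This forces $\length_{Y_i,\epsilon}(Y_{i+1}) < L$ for a constant $L = L(\epsilon, n)$ that reflects only the geometry of the thick part and the packing constant of $\BH^n$.

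The main obstacle I anticipate is the compatibility between the two electrifications -- of $Y_i$ inside $X$ and of the $\epsilon$-thin part inside $M$. An edge of $X \setminus Y_i$ could a priori wind inefficiently inside a single Margulis tube in a way that does not admit an obvious shortening by the standard edge moves, or it might oscillate repeatedly between tubes and the thick part. To handle this one needs a precise picture of how the edges of a minimal carrier graph interact with Margulis tubes -- presumably captured by the weaker notion of minimality (no length-reducing edge moves) referenced in \S\ref{edgemovesection} -- together with the observation that arcs confined to the $\epsilon$-thin part contribute zero to the electric length and can therefore be ignored when applying the thick-part pigeonhole. Termination of the induction is immediate since each step strictly enlarges the subgraph, so $k$ is bounded by the (finite) edge count of $X$; it is only the \emph{relative} length per step, not the total, that needs to be controlled by $L$.
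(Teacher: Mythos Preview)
Your proposal correctly sets up the inductive framework and invokes White's structure theorem, but the mechanism you propose for the inductive step is not the one that actually works, and there is a genuine gap.

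The paper does not give a self-contained proof here; it cites \cite{Biringergeometry} and sketches the key idea. That idea is \emph{not} a pigeonhole/packing argument in the thick part. Rather, one shows that if \emph{every} edge of $X \setminus Y_i$ has large length relative to $Y_i$ and $\epsilon$, then $\pi_1 M$ splits as a nontrivial free product of the images of the fundamental groups of the components of $Y_i$ (together with a free factor). Since a closed hyperbolic manifold has freely indecomposable fundamental group, this is impossible; hence some edge has bounded relative length, and one adjoins it to form $Y_{i+1}$. This is exactly where the closedness hypothesis enters, and the paper's Schottky counterexample---two hyperbolic isometries with long translation length and widely separated axes, giving a minimal rank-$2$ carrier graph with no short edges at all---shows the hypothesis cannot be dropped.

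Your pigeonhole argument has two concrete problems. First, any packing bound in the $\epsilon$-thick part of $M$ will depend on the volume or diameter of $M$, not merely on $\epsilon$ and $n$, so it cannot yield the universal constant $L(\epsilon,n)$ the proposition asserts. Second, even if a long geodesic edge comes close to itself in $M$, it is unclear what length-decreasing edge move this enables: edge moves slide an \emph{endpoint} of an edge along a path in the rest of the graph, and a near self-return in the interior of a single edge does not fit that template. The Schottky example again illustrates this: the carrier graph there meets all of White's local conditions (trivalent, $2\pi/3$ angles) yet admits no shortening move. The ingredient you are missing is precisely the free-product splitting argument, which converts ``all relative lengths are large'' into a global topological obstruction rather than a local geometric one.
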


Intuitively, $\length_{Y_i,\epsilon}(Y_{i+1})$ is the hyperbolic length of the part of $f(Y_{i + 1})$ that lies outside of the union of the following sets:
\begin {enumerate}
\item the convex hulls of the components of $f(Y_i)$,
\item any $\epsilon $-thin parts of $M$ where a curve from $f_*(\pi_1 Y_i)$ is short.
\end {enumerate}
However, to make this precise one must work in the universal cover -- see \S \ref {relativelengthsection}.  

\vspace{1mm}

Our primary application of the carrier graph machinery is the following claim, which is the heart of the proof of Theorem \ref{main}.

\begin {claim}  \label {theclaim}Given $n,\epsilon>0 $, there is some $L = L (n,\epsilon) $ as follows.  Suppose that $$f: X\longrightarrow M_\phi $$ is a minimal length carrier graph with $\rank (\pi_1 X) = m $.  If the $\epsilon $-electric circumference of $M_\phi $ is at least $L= L (m,\epsilon) $, then there is a subgraph $Y\subset X $ such that $f_*(\pi_1 Y)$ lies with finite index in the fiber subgroup $\pi_1 S \subset \pi_1 M_\phi $.
\end {claim}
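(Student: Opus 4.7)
The plan is to produce $Y$ as part of a bounded-relative-length filtration of $X$ and then to use the electric-circumference hypothesis to force $f_*(\pi_1 Y)$ to carry essentially the whole fiber subgroup. First, apply Proposition~\ref{chainsprop} with rank $n$ and the given $\epsilon$ to obtain a filtration $\emptyset = Y_0 \subset Y_1 \subset \cdots \subset Y_k = X$ with $\length_{Y_i, \epsilon}(Y_{i+1}) \leq L_0$ for some $L_0 = L_0(n,\epsilon)$. Letting $\pi : \pi_1 M_\phi \to \BZ$ be the projection with kernel $\pi_1 S$, the composite $\pi \circ f_*$ is surjective, so there is a smallest $i_0$ with $\pi(f_*(\pi_1 Y_{i_0})) \neq 0$. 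I set $Y := Y_{i_0-1}$ and $H := f_*(\pi_1 Y)$; by construction $H \subset \pi_1 S$, and the goal is to show $[\pi_1 S : H] < \infty$.

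The next step extracts a $d_\epsilon$-diameter bound from the circumference hypothesis. A dimension count in the cycle space of $Y_{i_0}$ produces a simple cycle $\gamma \subset Y_{i_0}$ that traverses at least one edge of $Y_{i_0}\setminus Y$ and satisfies $\pi(f_*[\gamma]) \neq 0$. By definition of the electric circumference, $f(\gamma)$ has $d_\epsilon$-diameter at least $L$. On the other hand, since $\gamma$ is a simple cycle, the hyperbolic length of $f(\gamma)$ outside the region $\Omega_Y$---the union of the convex hulls of the components of $f(Y)$ and the $\epsilon$-thin parts of $M_\phi$ where some element of $H$ is short---is at most $\length_{Y,\epsilon}(Y_{i_0}) \leq L_0$. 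Walking along $\gamma$ from any point to its nearest hit on $\partial\Omega_Y$ at $d_\epsilon$-cost at most $L_0$ and applying the triangle inequality then forces $\diameter_\epsilon(\Omega_Y) \geq L - 2L_0$.

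The core of the proof is now to bound $\diameter_\epsilon(\Omega_Y)$ by some $D = D(n,\epsilon)$ under the contrapositive assumption that $[\pi_1 S : H] = \infty$; choosing $L := 2L_0 + D + 1$ then yields the claim by contradiction. For the bound, lift $\Omega_Y$ to $\tilde\Omega_Y$ in the cover $M_H \to M_\phi$ corresponding to $H$; the covering map is $d_\epsilon$-nonincreasing, so it suffices to bound $\diameter_\epsilon(\tilde\Omega_Y)$. Since $H \subset \pi_1 S$, the cover $M_H$ factors through the doubly-degenerate fiber cover $\tilde M_S \cong S \times \BR$ (here the pseudo-Anosov hypothesis on $\phi$ is essential), and any infinite-index finitely generated subgroup of a surface group is free, so $H$ is free of rank at most $n$ by Scott's compact core theorem. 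Canary's Covering Theorem, applied to the intermediate cover $M_H \to \tilde M_S$, then rules out any geometrically infinite end of $M_H$: such an end would finitely cover an end of $\tilde M_S$ and so force $H$ to contain a finite-index subgroup of $\pi_1 S$, contradicting infinite index. Thus $M_H$ is geometrically finite with rank-$\leq n$ free fundamental group, its convex core has topological complexity bounded by $n$, and electrifying the $\epsilon$-thin parts (cusps and Margulis tubes) collapses them to diameter zero, leaving a compact thick piece of diameter controlled by $n$ and $\epsilon$. Since $\tilde\Omega_Y$ sits in a bounded neighborhood of this convex core, the desired bound follows.

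The chief obstacle is this last geometric estimate. One must verify that the short-$H$-curve thin parts in $M_\phi$ used to define $\Omega_Y$ lift precisely to the thin parts of $M_H$, so that electrification is compatible with the covering; handle the possibly disconnected $Y$ by applying the Covering Theorem argument component by component to the various image subgroups; and make the universal-cover description of $\Omega_Y$---flagged in the remark following Proposition~\ref{chainsprop}---compatible with these manipulations.
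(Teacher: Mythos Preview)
Your overall architecture matches the paper's: use the filtration from Proposition~\ref{chainsprop}, pass to covers corresponding to the subgraph groups, invoke tameness and Canary's Covering Theorem to get geometric finiteness, and play the resulting diameter bound off against the electric-circumference hypothesis. The gap is in the sentence you yourself flag as the ``chief obstacle'': the assertion that a geometrically finite cover $M_H$ with free fundamental group of rank $\leq n$ has convex core whose $\epsilon$-thick part has diameter bounded by a constant $D=D(n,\epsilon)$. This is false. A Schottky group generated by two loxodromics of moderate translation length whose axes are at distance $D$ gives a genus-$2$ handlebody with no $\epsilon$-short geodesics at all and convex core of diameter $\approx D$; so the $\epsilon$-electric diameter can be made arbitrarily large with $n=2$ fixed. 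More intrinsically, the boundary $\partial\,core(M_H)$ can carry $\epsilon$-thin parts whose core curves are \emph{compressible} in $M_H$: these are not Margulis tubes of $M_H$, they do not project into the $\epsilon$-thin part of $M_\phi$, and hence they are not collapsed by your electrification.

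What rescues the argument in the paper is that the bound on the convex core's electric diameter is allowed to depend on the $d_\epsilon$-diameter of the carrier subgraph itself, not just on $(n,\epsilon)$: because the lifted graph is $\pi_1$-surjective in $M_H$ and (by a minimal-length-rel-vertices argument) stays near $core(M_H)$, it must cross the compressing disks bounded by those compressible thin curves, so the bad thin parts of $\partial\,core(M_H)$ are no larger in $d_\epsilon$ than the graph. This is exactly Lemma~\ref{hullbounds}, and it forces the argument to be \emph{inductive along the filtration}: one shows $\diameter_\epsilon(Y_{i+1}^k)$ is bounded in terms of the $\diameter_\epsilon(Y_i^j)$, so long as every $f_*(\pi_1 Y_i^j)$ is infinite index in $\pi_1 S$. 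Your attempt to bound $\diameter_\epsilon(\Omega_{Y_{i_0-1}})$ in one shot, without first controlling $\diameter_\epsilon(Y_{i_0-1})$ via the earlier filtration steps, cannot succeed. Restructure your proof to run the covering-theorem/convex-core estimate once per filtration step, carrying the graph-diameter bound forward, and it will go through.
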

\begin {proof}
Consider the filtration of $X $ from Proposition \ref {chainsprop}: $$\emptyset = Y_0 \subset Y_1 \subset \ldots \subset Y_k = X.$$ 

\begin {lemma}[see \S \ref {lemmasection}]\label{hullbounds}
Fixing $i$, suppose that for each connected component $Y_i ^j\subset Y_i $, the image $f_*(\pi_1 Y_i^ j) $ is an infinite index subgroup of $\pi_1 S \lhd \pi_1 M_\phi $.  

Then for every component $Y^k_{i+1}$ of $ Y_{i + 1}$, the $d_{\epsilon} $-diameter of $f (Y^ k_{i+1})$ in $M_\phi $ is bounded above by a constant depending only on $g,\epsilon $ and the $d_{\epsilon} $-diameters of $f (Y^j_i) $, where $Y_i^j$ is a component of $Y_i $.
\end {lemma}

We will prove the lemma in \S \ref {lemmasection}, but the point is the following.  Since we know that $\length_{Y_i,\epsilon}(Y_{i+1}) $ is bounded, the $d_{\epsilon }$-diameter of the part of each $f(Y^k_{i + 1})$ that lies outside of the convex hulls of the $f(Y^j_i) \subset M_\phi$ is also bounded.  So, the goal is to bound the $d_{\epsilon} $-diameter of these convex hulls.

Applying Lemma \ref {hullbounds} iteratively, we get a bound in terms of $m,\epsilon $ for the $\epsilon$-electric diameter of the first subgraph $Y_i $ with a component $Y=Y_i ^ j $ such that $f_*(\pi_1 Y) $ is \emph {not} an infinite index subgroup of $\pi_1 S \subset \pi_1 M_\phi$.   As long as the $\epsilon $-electric circumference of $M_\phi $ is larger than this diameter bound, the group $f_*(\pi_1 Y) $ still lies in $\pi_1 S $, and therefore is a finite index subgroup of $\pi_1 S $.
\end {proof}

Now that we have proved the claim, we derive Theorem \ref {main}.

\begin {reptheorem}{main}
When $g\geq 2 $, there is some $L = L (g) $ such that if $\phi : S \longrightarrow S$ has translation length at least $L $ in the curve complex $C(S)$, then $$\rank (\pi_1 M_\phi) = 2g+1 $$
and all $2g+1$-element generating sets for $\pi_1 M_\phi $ are Nielsen equivalent.
\end {reptheorem}
\begin {proof}

Fixing $\epsilon=\epsilon (g)$ as in Lemma \ref {thickcircumference}, Claim \ref {theclaim} provides some $L=L(g)$ such that when the $\epsilon $-electric circumference of $M_\phi$ is at least $L $, any minimal length carrier graph $f: X\longrightarrow M_\phi$ with $\rank (\pi_1 X) \leq 2g+1$ has a subgraph $Y\subset X $ such that $f_*(\pi_1 Y)$ lies with finite index in the fiber subgroup $\pi_1 S \subset \pi_1 M_\phi $.  In fact, as $\rank (\pi_1 Y) < \rank (\pi_1 X) \leq 2g + 1 $, we have $f_*(\pi_1 Y)=\pi_1 S $, as any proper finite index subgroup of $\pi_1 S $ has larger rank.

So, suppose the $\epsilon $-electric circumference of $M_\phi$ is at least $L $ and $E $ is a generating set for $\pi_1 M_\phi$ with at most $2 g + 1 $ elements.  By Arzela-Ascoli, there is a minimal length carrier graph $f: X\longrightarrow M_\phi$ as above in the equivalence class corresponding to $E $.  From above, $\pi_1 X$ has a $2g+1$-element free basis such that the images of $2g$ of these elements generate $\pi_1 S \subset \pi_1 M_\phi$. So, $E$ has size $2g+1$ and is Nielsen equivalent to a generating set for $\pi_1 M_\phi$ of the form
$$\{e_1,\ldots,e_{2g+1}\} \subset \pi_1 M_\phi, \ \ \left<e_1,\ldots, e_{2g}\right> =\pi_1 S.$$
Zieschang \cite {Zieschanguber} has shown that any two minimal size generating sets for a surface group are Nielsen equivalent, so it follows immediately that any two generating sets for $\pi_1 M_\phi$ of the form above are also Nielsen equivalent.
\end {proof}

\begin {remark} In the proof of Lemma \ref{hullbounds}, we need to say that some properties of minimal length carrier graphs are inherited by subgraphs.  Instead of giving an argument specific to the situation, we develop all the minimal length machinery for carrier graphs that only have \emph {minimal length rel vertices}, see \S \ref {edgemovesection}, a property that \emph{is} inherited by subgraphs. This more relaxed version of minimality will also play a role in a forthcoming paper of the authors describing the geometry of closed hyperbolic $3$-manifolds with bounded rank and injectivity radius.
\end {remark}

\label {proof}

\section{Preliminaries}
\label {preliminaries}

\subsection {Convex cores and ends of hyperbolic $3$-manifolds} 
\label {cores}
Let $M$ be a hyperbolic $3$-manifold with finitely generated fundamental group, and assume that $M $ has no cusps.  By the Tameness theorem \cite {Agoltameness,Calegarishrinkwrapping}, $M $ is homeomorphic to the interior of a compact $3$-manifold.  If $M $ is noncompact, each of its ends then has a neighborhood homeomorphic to $\Sigma\times (0,\infty) $ for some closed surface $\Sigma $.  In the following, we recall a \emph{geometric} classification of these ends.  As all the stated results are extremely well-known, we eschew pinpoint references and refer the reader to \cite{Bonahonbouts,Canaryends,Matsuzakihyperbolic,Thurstongeometry}, for instance, for details.

The \emph {convex core}  of $M$, written $core(M)$, is the smallest convex submanifold of $M$ whose inclusion is a homotopy equivalence.  If $M=\BH^3 / \Gamma $, where $\Gamma $ is a discrete group of isometries of $\BH ^ 3 $, then  $core(M)$ is the quotient by $\Gamma $ of the convex hull in $\BH ^ 3 $ of the limit set $\Lambda (\Gamma) \subset \partial_\infty\BH ^ 3 $. The ends of $M$ are classified according to their intersections with $core(M)$.  Namely, an end $\CE$ of $M$ either has a neighborhood disjoint from $core(M)$, in which case $\CE$ called \emph {convex-compact}, or has a neighborhood contained in $core(M)$, in which case it is called \emph {degenerate}.

Both types of ends can occur in hyperbolic $3$-manifolds $M\cong S\times\BR $.   For example, orthogonally extending a discrete, cocompact action of $\Gamma =\pi_1 S$ on some totally geodesic plane $\BH^2 \subset \BH^3$ gives an action $\Gamma \circlearrowright \BH ^ 3 $.  The quotient $M=\BH ^ 3/\Gamma$ is homeomorphic to $S\times \BR $ and is called \emph{Fuchsian}.  In this case, the convex core is $\BH ^ 2/\Gamma$, which is compact, so both ends are convex cocompact.

Now suppose $\phi: S\longrightarrow S $ is pseudo-Anosov.  By Thurston's hyperbolization theorem \cite{Thurstonhyperbolic2}, the mapping torus $M_\phi $ admits a hyperbolic metric.  Let $\hat M_\phi $ be the infinite cyclic cover of $M_\phi $ corresponding to the subgroup $\pi_1 S \subset \pi_1 M_\phi $.  Then $\hat M_\phi\cong S\times\BR $, and the hyperbolic metric on $M_\phi $ lifts to a metric on $\hat M_\phi $.  As $\hat M_\phi $ regularly covers a closed manifold, it is its own convex core; for instance, one can see this by noting that a cocompact group $\Gamma $ of isometries of $\BH ^ 3 $ has full limit set and a nontrivial normal subgroup of $\Gamma $ has the same limit set as $\Gamma $.  It follows that both ends of $\hat M_\phi $ are degenerate.

\subsection {Electric geometry and simplicial hyperbolic surfaces}
\label{elecgeom}

We introduce here some terminology related to the geometry of hyperbolic manifolds with electrified thin parts. Afterwards, we describe a useful class of maps of surfaces into hyperbolic $3$-manifolds, called \emph {simplicial hyperbolic surfaces}, about which more information can be found in \cite {Calegarishrinkwrapping,Canarycovering,Somaexistence}.

Suppose that $M $ is a hyperbolic manifold without cusps --- here, the dimension of $M $ will always be either $2$ or $3$. The \emph {injectivity radius} at a point $p\in M$, written $\mathrm{inj}_M (p)$, is half the length of a shortest homotopically essential loop through $p $.  The \emph {$\epsilon $-thin part} of $M $ is the subset
$$M_{\leq \epsilon} = \{ p \in M\ | \ \mathrm {inj}_M (p) \leq\epsilon\}. $$
The complement of $M_{\leq \epsilon} $ is the \emph {$\epsilon $-thick} part of $M $, written $M_{>\epsilon} $.
When $\epsilon $ is less than the Margulis constant (see \cite {Benedettilectures}), every component of $M_{\leq\epsilon} $ is a neighborhood of a closed geodesic in $M $ and is homeomorphic either to  a solid torus (when $\dim(M)=3$) or to an annulus (when $\dim(M)=2$).

We construct a path pseudometric $d_\epsilon $ on $M $ by declaring that the length of any path contained in $M_{\leq\epsilon}$ is zero, while path lengths outside $M_{\leq\epsilon}$ are measured hyperbolically.  We say that $d_\epsilon $ is constructed from the hyperbolic metric on $M$ by \emph {electrifying} the $\epsilon $-thin part of $M $.  Metric quantities such as length and diameter have their \emph {$\epsilon $-electric} counterparts, written e.g.\ $\length_\epsilon $ and $\diameter_\epsilon $.

\begin {definition}A \emph {simplicial hyperbolic surface} is a map $f: S \longrightarrow M$ such that
\begin {enumerate}
\item each face of a triangulation $T$ of $S$ maps to a totally geodesic triangle, 
\item for each vertex $v \in T$ the angles between the images of the edges adjacent to $v $ sum to at least $2\pi$.
\end {enumerate}\end {definition}

A simplicial hyperbolic surface pulls back the hyperbolic metric on $M $ to a path-metric on $S$ that is smooth and hyperbolic away from the vertices of $T $, at which there are possible excesses of angle.  By the Gauss-Bonnet Theorem, $$\mathrm{vol}(S) \leq 2\pi (2g-2).$$  
Therefore, the injectivity radius of $S$ at every point is at most some $C=C(g)$, and the $\epsilon $-electric diameter of $S$ is at most some $C=C(\epsilon,g)$.  Accordingly,

\begin {fact}\label {boundeddiameter}
If $f: S \to M $ is a simplicial hyperbolic surface, there is a simple closed curve on $S$ whose image has length at most some $C=C(g)$.  Moreover, if $f $is $\pi_1 $-injective, the $\epsilon $-electric diameter of $f(S) \subset M$ is at most $C=C(g,\epsilon) $.
\end {fact}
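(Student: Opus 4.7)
My plan is to exploit the fact that the pulled-back path metric on $S$ is locally CAT($-1$) (since cone angles at vertices are $\geq 2\pi$) together with the Gauss--Bonnet area bound $\mathrm{vol}(S) \leq 2\pi(2g-2)$ that the paper has already noted. Since each triangle of $T$ maps isometrically to a totally geodesic triangle in $M$, the map $f$ is $1$-Lipschitz, so length and diameter bounds in the pulled-back metric on $S$ transfer directly to length and diameter bounds for the image in $M$.

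For the first assertion I would argue as follows. In a CAT($-1$) surface, an embedded metric ball of radius $r$ has area at least that of a radius-$r$ disk in $\mathbb{H}^2$, namely $2\pi(\cosh r - 1)$; cone excesses only inflate this. Combined with the Gauss--Bonnet bound this forces the injectivity radius at every point of $S$ to be at most $r_0(g) := \cosh^{-1}(2g-1)$, so through each point of $S$ there passes an essential geodesic loop of length $\leq 2r_0(g)$. A shortest essential loop on $S$ is automatically simple, by the standard cutting argument (at any self-intersection one of the two subloops is still essential and strictly shorter, contradicting minimality). Its image in $M$ has length at most $2r_0(g) =: C(g)$ by $1$-Lipschitzness.

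For the second assertion I would combine a thick-thin decomposition of $S$ with the $\pi_1$-injectivity hypothesis. Fix $\epsilon>0$ smaller than the three-dimensional Margulis constant and decompose $S$ into its $\epsilon$-thick part $S_{\geq \epsilon}$ and thin part $S_{<\epsilon}$ with respect to the pulled-back metric. A ball-packing argument using the same CAT($-1$) area comparison --- disjoint $\epsilon/4$-balls around a maximal $\epsilon/2$-separated net in $S_{\geq \epsilon}$ each contribute area at least $2\pi(\cosh(\epsilon/4)-1)$, while the total is $\leq 2\pi(2g-2)$ --- bounds the intrinsic diameter of $S_{\geq \epsilon}$ by some $C(g,\epsilon)$, and hence the $M$-diameter (and so the $d_\epsilon$-diameter) of $f(S_{\geq \epsilon})$. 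For a point $p \in S_{<\epsilon}$, there is an essential loop on $S$ through $p$ of length $<\epsilon$; by $\pi_1$-injectivity and $1$-Lipschitzness its image is an essential loop in $M$ through $f(p)$ of length $<\epsilon$, so by choice of $\epsilon$ and the Margulis lemma, $f(p)$ lies in the $\epsilon$-thin part of $M$, which is electrified. Since every component of $S_{<\epsilon}$ meets $S_{\geq \epsilon}$ along its boundary, $f(S)$ is the union of a set of bounded $d_\epsilon$-diameter and a collection of sets of vanishing $d_\epsilon$-diameter each attached to it, giving $\mathrm{diam}_{d_\epsilon}(f(S)) \leq C(g,\epsilon)$.

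The main point of care is the CAT($-1$) area comparison at cone points of excess angle, which is standard but not completely trivial; this is where the hypothesis that vertex angle sums are $\geq 2\pi$ plays its essential role. Apart from this, the argument is a direct combination of Gauss--Bonnet, the $1$-Lipschitz property of $f$, and the Margulis lemma in dimension three.
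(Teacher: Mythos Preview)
Your argument is correct and is exactly the approach the paper takes, only with the details filled in: the paper simply asserts that the Gauss--Bonnet area bound forces a uniform injectivity-radius bound and a bounded $\epsilon$-electric diameter on $S$ in the pulled-back metric, and then observes (as you do) that $\pi_1$-injectivity sends the $\epsilon$-thin part of $S$ into the $\epsilon$-thin part of $M$, so that the electric metric on $S$ dominates the electric metric on $M$. One small refinement worth making: bounding the cardinality of a global net in $S_{\geq\epsilon}$ does not by itself bound its diameter, so to make the packing step airtight, center your disjoint embedded balls at $2\epsilon$-spaced points along a \emph{minimizing} $S$-geodesic between the two given points (so arc-length equals ambient distance and the balls are genuinely disjoint), which directly bounds the $\epsilon$-electric length of that geodesic.
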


Here, the $\pi_1 $-injectivity ensurers that the $\epsilon $-thin parts of $S$ map into $\epsilon $-thin parts of $M$, so that electric distance on $S $ bounds electric distance in $M$.

Most existence results about simplicial hyperbolic surfaces stem from the following, which is essentially due to Thurston.

\begin {lemma}[compare with Example 8.7.3, \cite{Thurstongeometry}]\label {existence}
Let $g : S \longrightarrow M $ be a $\pi_1$-injective map into a hyperbolic $3$-manifold $M $ without cusps and $\alpha $ a multi-curve on $S$.  Then $g $ is homotopic to a simplicial hyperbolic surface $f: S \longrightarrow M $ that \emph {realizes} $\alpha $, i.e.\ such that $f $ maps each component of $\alpha $ to a closed geodesic.
\end {lemma}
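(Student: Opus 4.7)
The plan is to carry out Thurston's classical construction, building $f$ from a triangulation of $S$ whose vertices all lie on $\alpha$. Since $g$ is $\pi_1$-injective and $M$ has no cusps, each component of $\alpha$ maps to a non-trivial loxodromic conjugacy class in $\pi_1 M$, hence has a unique closed geodesic representative in its free homotopy class. So I would first homotope $g$ in a small annular neighborhood of $\alpha$ so that each component of $\alpha$ maps onto the corresponding closed geodesic in $M$.

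Next, fix a triangulation $T$ of $S$ containing $\alpha$ as a subcomplex and all of whose vertices lie on $\alpha$. This is a purely topological step: add enough vertices to each component of $\alpha$, and then use disjoint arcs with endpoints on $\alpha$ to cut each component of $S\setminus\alpha$ into triangles. With $T$ in hand, pass to the universal cover $\tilde g\colon \tilde S\longrightarrow \BH^3$. For each edge $e$ of $T$, choose an edge-path lift $\tilde e$ of $e$ in $\tilde S$; since both endpoints of $\tilde e$ lie on (chosen lifts of) closed geodesics, their images in $\BH^3$ are well-defined points, and we may straighten $\tilde g|_{\tilde e}$ to the geodesic segment between them. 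Similarly replace each face $\sigma$ of $T$ by the totally geodesic triangle in $\BH^3$ spanned by its three lifted vertices. These straightenings are equivariant under deck transformations, so they descend to a map $f\colon S\longrightarrow M$ homotopic to $g$, whose restriction to each component of $\alpha$ parametrizes the corresponding closed geodesic, and whose faces are all totally geodesic.

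The key remaining point is the angle condition at each vertex $v$ of $T$. By construction $v$ lies on some geodesic component $\gamma$ of $\alpha$, and two of the edges of $T$ at $v$ lie along $\gamma$ itself, giving antipodal outgoing unit tangent vectors $\pm w\in T^1_{f(v)}M$. Order the outgoing tangent directions of the edges at $v$ that lie on one side of $\gamma$ cyclically as $w=u_0,u_1,\ldots,u_m=-w$; the interior angles at $v$ of the triangles on this side are precisely the angles $\angle(u_i,u_{i+1})$, and by the triangle inequality on the unit sphere $T^1_{f(v)}M$,
$$\sum_{i=0}^{m-1}\angle(u_i,u_{i+1})\ \geq\ \angle(u_0,u_m)\ =\ \pi.$$
The same argument applied on the other side of $\gamma$ contributes another $\pi$, so the total angle at $v$ is at least $2\pi$, as required.

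The only real obstacle I foresee is the combinatorial step of producing $T$, particularly if $\alpha$ has few components or $S\setminus\alpha$ has complementary regions with few boundary points; this is always handled by subdividing $\alpha$ finely enough, since every complementary region is an essential subsurface with boundary on $\alpha$. Potentially degenerate geodesic triangles (when three vertex lifts happen to be collinear in $\BH^3$) are permitted by the definition of a simplicial hyperbolic surface and cause no difficulty, since the angle sum argument above uses only the antipodality of the two $\gamma$-directions and not any non-degeneracy of the incident faces.
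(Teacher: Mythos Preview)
Your argument is correct and is precisely the classical Thurston construction that the paper invokes by citing Example~8.7.3 of \cite{Thurstongeometry}; the paper gives no independent proof. The key angle estimate via the spherical triangle inequality, using that every vertex sits on a geodesic so two incident edge-directions are antipodal, is exactly the standard one.
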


Canary showed in \cite {Canarycovering} how to interpolate between simplicial hyperbolic surfaces using simplicial hyperbolic surfaces, which gives more powerful existence results.  The following is a version of the much stronger theorem of \cite {Canarycovering} with the same name; it can be proved using the arguments of \cite[Theorem 6.2]{Canarycovering}.

\begin {theorem}[Canary's Filling Theorem]\label{filling}
There is some constant $C=C(g)$ with the following property.  Suppose that $M$ is a hyperbolic $3$-manifold homeomorphic to $S \times\BR $ and a radius $C$ ball around a point $p\in M$ is contained in the convex core of $M$. Then $p$ lies in the image of a simplicial hyperbolic surface $f: S \to M$ in the homotopy class of the fiber. 
\end {theorem}

When $\phi: S\longrightarrow S $ is pseudo-Anosov, the infinite cyclic cover $\hat M_\phi $ of the mapping torus $M_\phi $ is its own convex core (see \S\ref {cores}). So, Canary's Filling Theorem implies that every point in $M_\phi$ lies in the image of a simplicial hyperbolic surface.   Projecting to $M_\phi $, we have that every point of $M_\phi $ lies in the image of a simplicial hyperbolic surface in the homotopy class of the fiber.

Simplicial hyperbolic surfaces $f : S\longrightarrow M_\phi $ are in general not embedded.  However, work of Freedman-Hass-Scott (see \cite [Theorem 2.5] {Canarylimits}), implies that any $f $ in the homotopy class of the fiber is homotopic to an embedding, which maybe is not simplicial hyperbolic, but lies arbitrarily close to the image of $f $ and still has an electric diameter bound:

\begin {corollary}\label {levelsurfaces}
If $\phi: S\longrightarrow S$ is pseudo-Anosov, $\delta>0$, and $p\in M_\phi $, there is an embedding $S \into M$ in the homotopy class of the fiber such that
\begin {enumerate}
\item the hyperbolic distance from $p $ to $ S $ is at most $\delta $,
\item the $\epsilon $-electric diameter of $S\subset M $ is at most some $C=C(g,\epsilon) $.
\end {enumerate}
\end {corollary}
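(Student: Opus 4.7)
The plan is to combine Canary's Filling Theorem with the Freedman--Hass--Scott embedding result cited above, passing through the infinite cyclic cover to obtain a simplicial hyperbolic surface near the given point and then promoting it to an embedding.

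First I would lift the point $p \in M_\phi$ to a point $\hat p \in \hat M_\phi$ in the infinite cyclic cover corresponding to $\pi_1 S \subset \pi_1 M_\phi$. As discussed in \S\ref{cores}, $\hat M_\phi$ is homeomorphic to $S \times \BR$ and equals its own convex core, so every point of $\hat M_\phi$ lies in the convex core. Applying Canary's Filling Theorem in $\hat M_\phi$, there is a simplicial hyperbolic surface $\hat f : S \longrightarrow \hat M_\phi$ in the homotopy class of the fiber such that $d(\hat p, \hat f(S)) \leq C(g)$. Since $\hat f$ is in the homotopy class of the fiber inclusion of $\hat M_\phi \cong S \times \BR$, it is $\pi_1$-injective, and Fact \ref{boundeddiameter} gives an $\epsilon$-electric diameter bound on $\hat f(S) \subset \hat M_\phi$ depending only on $g$ and $\epsilon$.

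Next I would project down. Composing $\hat f$ with the covering map $\hat M_\phi \longrightarrow M_\phi$ yields a simplicial hyperbolic surface $f : S \longrightarrow M_\phi$ in the homotopy class of the fiber whose image passes within $C(g)$ of $p$; because the covering projection is a local isometry that sends thin parts into thin parts, the $\epsilon$-electric diameter of $f(S) \subset M_\phi$ is bounded by the same $C(g,\epsilon)$. At this point $f$ need not be an embedding, so I would invoke the Freedman--Hass--Scott theorem as recorded in \cite[Theorem 2.5]{Canarylimits}: $f$ is homotopic to an embedding $S \hookrightarrow M_\phi$ in the fiber class whose image lies in a small neighborhood of $f(S)$. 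The neighborhood can be taken of bounded (e.g.\ universally bounded) hyperbolic size, so the embedded surface still lies within some $C(g)$ of $p$ and still has $\epsilon$-electric diameter at most $C(g,\epsilon)$, giving both conclusions.

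The main subtlety, and the only step that is not entirely formal, is the last one: one must verify that the Freedman--Hass--Scott replacement preserves both the hyperbolic proximity to $p$ and the $\epsilon$-electric diameter bound. Proximity to $p$ is automatic from the statement that the embedding lies close to $f(S)$; for the electric diameter, one uses that the embedded surface is still $\pi_1$-injective (being in the fiber class) and that the ambient neighborhood of $f(S)$ where it lives has controlled size, so electric distances measured in the embedded surface only inflate by a constant depending on $g$ and $\epsilon$. Once this is checked, the two conclusions of the corollary follow with explicit constants.
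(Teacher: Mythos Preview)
Your proposal is correct and follows essentially the same route as the paper. The paper's argument is the two paragraphs immediately preceding the corollary: apply Canary's Filling Theorem in the cyclic cover $\hat M_\phi$ (which is its own convex core), project the resulting simplicial hyperbolic surface to $M_\phi$, and then invoke Freedman--Hass--Scott to replace it by a nearby embedding with the same electric diameter bound; your write-up simply expands on the details the paper leaves implicit.
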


\section {Electric distance and the curve complex}
\label {circumferencesection}

As stated in Section \ref {proof}, the \emph {$\epsilon $-electric circumference} of a hyperbolic mapping torus is the least $\epsilon $-diameter of a loop projecting nontrivially to $\pi_1 S ^ 1 $, and is related to the curve complex translation distance $\tau(\phi)$ of the monodromy:

\begin {replemma}{thickcircumference}[Electric diameter and $\tau(\phi)$]
There are $\epsilon=\epsilon(g) $ and $k=k(g,\epsilon)$ such that if $M_\phi$ is the mapping torus of a pseudo-Anosov map $\phi:S \longrightarrow S,$
$$ \diameter_\epsilon (M_\phi) \approx_k \circumference_\epsilon (M_\phi )\approx_k\tau (\phi). $$
\end {replemma}
For convenience, here and below we write $A \leq_k B$ if $A \leq k \cdot B + k$ and $A \approx_k B$ if $A \leq_k B$ and $B \leq_k A$.  Lemma \ref {thickcircumference} is a consequence of the following: 

\begin {theorem}\label{models}
For every $L>0$ and small $\epsilon>0 $, there are constants $k_1=k_1(g,L)$ and $k_2=k_2(g,\epsilon) $ such that if $M \cong S\times\BR $ is a hyperbolic $3$-manifold without cusps and $\alpha,\beta$ are simple closed curves on $S$ with geodesic representatives $\hat\alpha,\hat \beta \subset M $ of length at most $L $, then 
$$d_{C (S)}  (\alpha,\beta) \, \leq_{k_1} \, d_{\epsilon} (\hat \alpha,\hat\beta)\,  \leq_{k_2} \, d_{C (S)} (\alpha,\beta).$$
\end {theorem}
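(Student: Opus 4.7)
The plan is to use $\pi_1$-injective simplicial hyperbolic surfaces in the homotopy class of the fiber as a bridge between the two distances. Each such surface has $\epsilon$-electric diameter at most $C(g,\epsilon)$ by Fact \ref{boundeddiameter} and carries a simple closed curve of hyperbolic length at most $C(g)$, producing a well-defined vertex of $C(S)$. I would prove the two inequalities separately.

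For the upper bound $d_\epsilon\leq_{k_2}d_{C(S)}$, I take a $C(S)$-geodesic $\alpha=\gamma_0,\ldots,\gamma_n=\beta$; consecutive vertices are disjoint, so $\gamma_i\cup\gamma_{i+1}$ is a multicurve. By Lemma \ref{existence} each such multicurve is realized as closed geodesics on a simplicial hyperbolic surface $f_i:S\to M$, whose image has $\epsilon$-electric diameter at most $C(g,\epsilon)$ by Fact \ref{boundeddiameter}. Hence $d_\epsilon(\hat\gamma_i,\hat\gamma_{i+1})\leq C(g,\epsilon)$, and telescoping yields $d_\epsilon(\hat\alpha,\hat\beta)\leq n\cdot C(g,\epsilon)$. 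The constant depends only on $g,\epsilon$, explaining why $k_2$ does not depend on $L$.

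For the lower bound $d_{C(S)}\leq_{k_1}d_\epsilon$, set $D=d_\epsilon(\hat\alpha,\hat\beta)$ and discretize a $d_\epsilon$-geodesic from $\hat\alpha$ to $\hat\beta$ into sample points $p_0\in\hat\alpha,\ldots,p_N\in\hat\beta$ with consecutive $d_\epsilon$-distance at most $1$ and $N\leq D+1$. For intermediate $i$, Corollary \ref{levelsurfaces} supplies an embedded level surface $\Sigma_i$ in the fiber class within bounded hyperbolic distance of $p_i$, having $\epsilon$-electric diameter at most $C(g,\epsilon)$. For the endpoints, the hypothesis $\ell(\hat\alpha),\ell(\hat\beta)\leq L$ lets me use Lemma \ref{existence} to take $\Sigma_0$ to be a simplicial hyperbolic surface realizing $\alpha$, and similarly for $\Sigma_N$; this is where $k_1$ acquires its $L$-dependence. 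Extract a bounded-length simple closed curve $c_i$ on each $\Sigma_i$ via Fact \ref{boundeddiameter}, taking $c_0=\alpha$ and $c_N=\beta$. It then suffices to show $d_{C(S)}(c_i,c_{i+1})\leq C'=C'(g,\epsilon)$, after which summing yields $d_{C(S)}(\alpha,\beta)\leq C'(D+1)$.

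The main obstacle is this per-step bound. In the generic case, $\Sigma_i$ and $\Sigma_{i+1}$ lie a bounded hyperbolic distance apart outside the thin part, so the bounded-length representatives of $c_i,c_{i+1}$ in $M$ sit in a compact region of controlled injectivity radius; an area argument caps their geometric intersection number, and Hempel's inequality $d_{C(S)}(\cdot,\cdot)\leq 2\log_2 i(\cdot,\cdot)+2$ completes the step. The delicate case is when the electric segment $[p_i,p_{i+1}]$ crosses a Margulis tube of a short geodesic $\delta\subset M$; there, writing $\delta_S$ for the simple closed curve on $S$ in the free homotopy class of $\delta$ (using $\pi_1 M=\pi_1 S$), Lemma \ref{existence} produces a simplicial hyperbolic surface $\Sigma_\delta$ realizing $\delta_S$ as the geodesic $\delta$, whose image is $d_\epsilon$-close to both $\Sigma_i$ and $\Sigma_{i+1}$ because all three contain or abut the tube. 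Applying the generic argument to the pairs $(\Sigma_i,\Sigma_\delta)$ and $(\Sigma_\delta,\Sigma_{i+1})$ bridges $c_i$ and $c_{i+1}$ through $\delta_S$ with bounded $C(S)$-distance, finishing the lower bound.
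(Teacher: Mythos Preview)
Your upper bound is exactly the paper's argument: realize consecutive disjoint pairs on simplicial hyperbolic surfaces via Lemma~\ref{existence}, chain the bounded electric diameters from Fact~\ref{boundeddiameter}.

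Your lower bound has the same skeleton as the paper's---discretize an electric geodesic, associate to each sample point a bounded-length simple closed curve via a simplicial hyperbolic surface, then bound consecutive curve complex distances---but the treatment of the per-step bound diverges, and this is where the content lies.

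For the thick (``generic'') step, the paper does not attempt a direct argument: it invokes \cite[Theorem~7.16]{Brockgeometric} as a black box to conclude $d_{C(S)}(\alpha_i,\alpha_{i+1})\leq_k \mathrm{length}(\gamma_i)$. Your proposed replacement---``an area argument caps their geometric intersection number, then apply Hempel's inequality''---hides the actual difficulty. The curves $\hat c_i,\hat c_{i+1}$ have bounded length \emph{in $M$} and lie at bounded hyperbolic distance, but to run an area/collar argument bounding $i(c_i,c_{i+1})$ you need bounded-length representatives \emph{on a single} negatively curved surface in the fiber class. Producing such a surface, or equivalently a controlled projection of $\hat c_{i+1}$ onto a pleated surface realizing $c_i$, is precisely the substance of the Brock--Bromberg result (and of Minsky's earlier work); it does not fall out of a bounded-injectivity-radius observation. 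So your sketch is not wrong in spirit, but the step you flag as routine is the theorem being cited.

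For the thin step, your bridge through $\Sigma_\delta$ has a gap: you assert $\Sigma_\delta$ is $d_\epsilon$-close to $\Sigma_i,\Sigma_{i+1}$ and then ``apply the generic argument,'' but your generic argument needs bounded \emph{hyperbolic} distance, and a Margulis tube can have arbitrarily large depth. The paper handles this more directly: it decomposes the path at the thick/thin transitions and, by taking $\epsilon$ much smaller than the Margulis constant $\epsilon_0$, forces the bounded-length curves $\hat\alpha_i,\hat\alpha_{i+1}$ produced at either end of an $\epsilon$-thin segment to lie in the same $\epsilon_0$-Margulis tube, hence to be homotopic to its core (and to each other). This makes the thin crossings contribute zero to the curve complex distance, with no appeal to the generic case. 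You could repair your version with the same observation---your $c_i$ near the tube boundary is already forced to equal $\delta_S$---at which point the bridge surface $\Sigma_\delta$ becomes unnecessary.
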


This result first appeared in a paper of Bowditch \cite[Theorem 5.4]{Bowditchending}. However, we feel that credit should also be given to Yair Minsky, since Theorem \ref{models} is implicit in the development of the model manifolds of \cite{Minskyclassification}, and to Brock-Bromberg \cite{Brockgeometric}, who prove a closely related result that we will use below. We have chosen to present a proof of Theorem \ref{models} here since Bowditch's proof appears in the middle of a difficult paper and is hard to extract. 

Below, unless we say otherwise we use `bounded' to mean bounded above by a constant depending only on $\epsilon $ and on $g $.

\begin {proof}[Proof of Theorem \ref {models}]
For the upper bound, suppose that $$\alpha =\alpha_0, \alpha_1, \ldots, \alpha_n = \beta$$ is a path in $C(S)$. For each $i$, the multi-curve $\alpha_i \cup \alpha_{i + 1}$ can be realized geodesically by a simplicial hyperbolic surface $f_i:S_i \to M$ in the homotopy class of the fiber, by Lemma \ref {existence}.  By Fact \ref {boundeddiameter}, the images $f_i (S_i) $ have bounded electric diameter.
Since each image $f (S_i)$ intersects $f (S_{i+1})$ along the geodesic $\hat \alpha_{i+1}$, the electric distance from $\hat \alpha $ to $\hat\beta $ is bounded above by a linear function of $n$, and therefore a linear function of $\tau(\phi)$.

The lower bound is a bit harder.  Let $\gamma $ be a path realizing the electric distance from $\hat \alpha$ to $\hat \beta $ and subdivide $\gamma $ into a concatenation 
$$\gamma =\gamma_1 \cdots \gamma_n, $$
such that for odd $i$, the path $\gamma_i $ lies in the $\epsilon $-thick part of $M $ and for even $i $, in the $\epsilon$-thin part of $M$.  (Start the index at $i = 0 $ if necessary.)  We then have
\begin {equation}\label {lengths}\sum_{i \text { odd}} \length(\gamma_i) =  \length_\epsilon(\gamma), \end {equation}
where on the left length is absolute and on the right it is electric.  

By Canary's Filling Theorem (Theorem \ref {filling}) and Fact \ref {boundeddiameter}, for each $i $ there is a simple closed curve $\alpha_i $ on $S $ whose geodesic representative $\hat \alpha_i$ has bounded length and lies at bounded (absolute) distance from the left endpoint of $\gamma_i $.  Here and in the rest of this proof, `bounded' means by a constant depending on $g $.  As long as $\epsilon $ is much smaller than the Margulis constant $\epsilon_0 $, for even $i $ the geodesics $\hat\alpha_i$ and $\hat \alpha_{i+1} $ must lie in the same component of the $\epsilon_0 $-thin part of $M$, which can only happen if the simple closed curves $\alpha_i$ and $\alpha_{i + 1}$ are homotopic.  For odd $i$, it follows from a theorem of Brock-Bromberg \cite [Theorem 7.16]{Brockgeometric} that
$$d_{C(S)}(\alpha_i, \alpha_{i + 1}) \leq_k \length (\gamma_i), $$
where $k =k(g)$.  Therefore, we conclude from Equation \eqref{lengths} that
$$d_{C (S)} (\alpha_1,\alpha_n)\leq_k \length_\epsilon (\gamma) $$
for some slightly larger $k=k(g)$.  Applying Brock-Bromberg again to bound the distance between $\alpha,\beta $ and $\alpha_1,\alpha_n$, we can replace this with the inequality
$$d_{C (S)} (\alpha,\beta)\leq_k \length_\epsilon (\gamma), $$
except that now $k=k(g,L)$ depends on the length bound $L$ for $\hat\alpha $ and $\hat \beta $.
\end {proof}

\begin {proof}[Proof of Lemma \ref {thickcircumference}]
To set up some notation, let $\hat M_\phi\cong S\times\BR$ be the infinite cyclic cover corresponding to the subgroup $\pi_1 S\subset \pi_1 M_\phi$, see \S \ref {cores}.  We also let $$\hat\phi: \hat M_\phi \longrightarrow \hat M_\phi $$ be the isometry that generates the deck group of $\hat M_\phi \longrightarrow M_\phi $.

We first show that $\diameter_\epsilon (M_\phi) \leq_k \circumference_\epsilon (M_\phi). $  Suppose that $p\in M $ and let $\gamma$ be a loop in $M_\phi $ realizing the $\epsilon $-electric circumference.   Corollary~\ref {levelsurfaces} gives an embedded surface $S\into M_\phi $ in the homotopy class of the fiber that has bounded $\epsilon $-electric diameter and lies at bounded distance from $p $.  As $\gamma $ projects essentially under $M_\phi\longrightarrow S ^ 1 $, this surface intersects $\gamma $.  So, the $\epsilon $-electric diameter of $M_\phi $ is at most a bounded constant plus the $\epsilon $-electric diameter of $\gamma$, and therefore is at most a bounded multiple of $\tau (\phi) $.

\vspace {1mm}

Next, we show that $\circumference_\epsilon (M_\phi) \leq_k \tau(\phi). $  Pick a simple closed curve $\alpha $ on $S$ such that $d_{C(S)} (\alpha,\phi (\alpha))=\tau (\phi)$.  
Then if $\hat \alpha$ is the geodesic representative of $\alpha $ in $\hat M_\phi$, the geodesic representative of $\phi (\alpha) $ is $\hat\phi (\alpha)$. By Theorem \ref {models}, there is a path $\hat \gamma $ from $\hat \alpha$ to $ \hat \phi(\alpha) $ in $\hat M_\phi $ with
$$\length(\hat\gamma) \leq_{k_2} \tau(\phi). $$
 By Lemma \ref {existence} and Fact \ref {boundeddiameter}, $ \hat \phi(\alpha)$ has bounded $\epsilon $-electric \emph {diameter} in $\hat M_\phi $.  So, after increasing the length of $\hat\gamma $ by a bounded $\epsilon $-electric amount, we may assume that the endpoints of $\hat\gamma $ differ by $\hat\phi$.  The image of $\hat\gamma $ in $M_\phi $ is then a loop $\gamma $ that has $\epsilon $-electric length at most some multiple of $\tau(\phi)$, and which projects to an essential loop under the fibration $M_\phi \longrightarrow S ^ 1 $.   As $d_\epsilon $-length bounds $d_\epsilon $-diameter of a loop, this shows that $\circumference_\epsilon (M_\phi) \leq_k \tau(\phi) $ as desired.

\vspace {1mm}

Finally, we claim that $\tau (\phi)  \, \leq_k \, \diameter_\epsilon(M_\phi)$.   By Fact \ref {boundeddiameter} and Lemma \ref{existence}, there is a simple closed curve $\alpha $ on $S $ such  that geodesic $\hat\alpha $ in $\hat M_\phi $ has bounded length.  We claim that the $d_\epsilon (\hat \alpha,\hat \phi( \alpha))$ is at most a bounded multiple of $\diameter_\epsilon(M_\phi)$.  Theorem \ref {models} will then imply that the curve complex distance between $\alpha  $ and $\phi(\alpha) $ is similarly bounded, finishing the lemma.

Using Corollary \ref {levelsurfaces}, pick an embedding $f : S \longrightarrow M_\phi $ such that $f (S) $ has bounded $\epsilon $-electric diameter and lies at bounded distance from the projection of $\hat\alpha $ to $M_\phi $.  Then $f$ lifts to $\BZ$-many disjoint embeddings $S \longrightarrow \hat M_\phi $. If $\hat f $ is such a lift that intersects $\hat  \alpha $, the rest of the lifts are of the form ${\hat \phi}^n \circ \hat f$, where $n\in\BZ $.

As $\pi_1 M_\phi$ is generated by loops with $\epsilon $-electric length at most twice the $\epsilon $-electric diameter of $M_\phi$, there must be a loop $\gamma $ in $M_\phi $ with $$\length_\epsilon (\gamma) \leq 2\diameter_\epsilon(M_\phi)$$ that does \emph {not} lie in $\pi_1 S \subset\pi_1 M_\phi$. It follows that $\gamma $ must intersect $f(S)$.  Cutting $\gamma $ at some such intersection point, lift it to an arc $\hat\gamma $ in $\hat M_\phi$ that starts on $\hat f(S)$ and ends on $\hat\phi^n \circ \hat f (S) $ for some $n\neq 0$. Up to reversing the direction we traverse $\gamma $, we may assume $n>0$.  For $n>1$, the surface $\hat\phi \circ \hat f(S)$ separates $\hat f (S)$ from $\hat\phi ^n\circ \hat f(S)$ in $\hat M_\phi $.   Therefore, in any case $\hat\gamma$ must intersect $\hat\phi \circ \hat f (S)$.

To recap, $\hat \alpha $ lies at bounded distance from $\hat f (S)$, which has bounded electric diameter.  An arc of $\hat \gamma $ connects $\hat f (S)$ to $\hat\phi \circ \hat f(S)$, which has bounded electric diameter and intersects $\hat {\phi( \alpha)}$.  So, we would be done if we knew that the $\epsilon $-electric length of $\hat\gamma$ was bounded by a multiple of $\diameter_\epsilon (M_\phi) $.

The $\epsilon $-electric length of $\gamma$ in $  M_\phi$ is at most twice $\diameter_\epsilon(M_\phi)$.  A priori, the electric length of $\hat \gamma $ could be larger, since in $\hat M_\phi$ we electrify the $\epsilon $-thin parts of $\hat M_\phi$, which may not be the entire preimage of the $\epsilon $-thin part of $M_\phi $.  However, this does not happen, since by a result of Otal \cite{Otalgeodesiques}, any sufficiently short loop (i.e., with length less than some constant depending on genus) in a hyperbolic mapping torus is homotopic into the fiber.
\end {proof}

\section{Convexity, carrier graphs and relative length}
\label {carriergraphsection}

In this section, we give some background on convexity and carrier graphs, and introduce the notion of relative length referenced in Proposition \ref {chainsprop}. 

\vspace{1mm}

\subsection {Convex hulls in $\BH ^ n $} A subset $A \subset \BH ^n$ is called \emph {convex} if every geodesic segment with endpoints in $A $ is entirely contained in $A$.  The \emph {convex hull} of a subset $A \subset \BH ^ n $ is the smallest convex subset $hull (A) \subset\BH ^ n $ containing $A $.  Additionally, if $K \geq 0 $ then a subset $A \subset \BH ^ n$ is called \emph{$K $-quasiconvex} if every geodesic segment with endpoints in $A$ is contained in the radius-$K $ neighborhood $\CN_K (A)$ of $A $.

Every geodesic with endpoints in a set $A \subset \BH ^n$ lies inside of $hull (A) $, so any set $A $ with a $K $-neighborhood that contains $hull (A) $ is $K $-quasiconvex.  The converse is also true, up to an increase in the constant:

\begin {fact}\label {quasiconvex}
If $A \subset \BH ^ n $ is $K $-quasiconvex, then $hull (A) \subset \CN_{nK} (A) $.
\end {fact}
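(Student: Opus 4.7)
\medskip

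The plan is to combine two standard ingredients: Carathéodory's theorem, which lets us reduce to the hull of $n+1$ points of $A$, and the convexity of the distance function in $\mathbb{H}^n$, which lets us propagate the quasiconvexity bound inductively as we build up that hull one point at a time.

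First I would invoke Carathéodory's theorem to write
$$\mathrm{hull}(A) \;=\; \bigcup \bigl\{ \mathrm{hull}(a_0,\ldots,a_n) : a_0,\ldots,a_n \in A \bigr\}.$$
Carathéodory is usually stated for $\mathbb{R}^n$, but it transfers to $\mathbb{H}^n$ because in the Klein model hyperbolic geodesics are Euclidean line segments, so hyperbolic convex hulls are Euclidean convex hulls inside the ball. Hence it is enough to show that for every $(n+1)$-tuple $a_0,\ldots,a_n \in A$,
$$\mathrm{hull}(a_0,\ldots,a_n) \;\subset\; \mathcal{N}_{nD}(A).$$

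To prove this, I would induct on $k$, showing $\mathrm{hull}(a_0,\ldots,a_k) \subset \mathcal{N}_{kD}(A)$ for $0 \le k \le n$. The base case $k=1$ is immediate from $D$-quasiconvexity: $\mathrm{hull}(a_0,a_1) = [a_0,a_1] \subset \mathcal{N}_D(A)$. For the inductive step, I use the cone decomposition
$$\mathrm{hull}(a_0,\ldots,a_k) \;=\; \bigcup_{q\,\in\,\mathrm{hull}(a_0,\ldots,a_{k-1})} [q, a_k],$$
which holds in any uniquely geodesic convex space. Given $q$ in the $(k-1)$st hull, the inductive hypothesis produces $a' \in A$ with $d(q,a') \le (k-1)D$. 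The key geometric fact is that in $\mathbb{H}^n$ the distance function is convex along geodesics (CAT(0)), so the Hausdorff distance between the segments $[q,a_k]$ and $[a',a_k]$ is at most $d(q,a') \le (k-1)D$. Since $[a',a_k] \subset \mathcal{N}_D(A)$ by quasiconvexity, we get $[q,a_k] \subset \mathcal{N}_{kD}(A)$, closing the induction.

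Taking $k=n$ and combining with Carathéodory gives $\mathrm{hull}(A) \subset \mathcal{N}_{nD}(A)$. The main thing to be careful about is the cone decomposition together with the CAT(0) comparison for segments sharing an endpoint — this is what forces the constant to grow linearly with $k$ at each step and produces the final factor of $n$. Everything else is bookkeeping.
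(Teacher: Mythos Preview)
Your proof is correct and follows essentially the same strategy as the paper: both use CAT(0) convexity of the distance function for the inductive step and a Carath\'eodory-type fact to terminate after $n$ steps. The only difference is packaging---you invoke Carath\'eodory directly via the Klein model and work one simplex at a time, while the paper iterates the join operation $hull^1$ globally and cites an external reference for $hull^n(A) = hull(A)$; the CAT(0) comparison you use for $[q,a_k]$ versus $[a',a_k]$ is exactly the ``convexity of the distance function'' step the paper leaves implicit.
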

\begin {proof}
Let $hull ^ 1 (A) $ be the union of all geodesic segments with endpoints in $A $ and define $hull  ^ i (A) =hull ^ 1 (hull ^ {i - 1}(A))$ inductively.  As $A $ is $K $-quasiconvex, $hull ^ 1 (A) \subset \CN_K (A)$.  An inductive argument using the convexity of the distance function shows that $hull ^ i (A)  \subset \CN_{iK} (A) $ for all $i $.  But by \cite [Corollary 2.8]{Sahattchieveconvex}, for instance, in hyperbolic or Euclidean $n $-space we have $hull ^ n (A) =hull (A) $.
\end {proof}

In $\BH ^  n $, quasiconvexity is somewhat robust.

\begin {fact}\label {unions}
For every $K\geq 0 $, there is some $K' =K'(K)$ such that if $\{A_i, \, i\in I\}$ is a collection of $K $-quasiconvex subsets of $\BH ^ n$ and every $A_i $ intersects some fixed $A_{i_0}$, then the union $\cup_i A_i$ is $K' $-quasiconvex.
\end {fact}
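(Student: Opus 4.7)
My plan is to prove this by a straightforward thin-triangle (or thin-quadrilateral) argument, exploiting the hyperbolicity of $\BH^n$ together with the assumption that every $A_i$ meets the distinguished set $A_{i_0}$, which acts as a bridge connecting any two members of the family.

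Recall that $\BH^n$ is $\delta$-hyperbolic for some absolute constant $\delta$, in the sense that any side of a geodesic triangle lies in the $\delta$-neighborhood of the union of the other two sides; iterating, any side of a geodesic quadrilateral lies within $2\delta$ of the union of the other three. Fix $p, q \in \bigcup_i A_i$, say $p \in A_i$ and $q \in A_j$. Using the hypothesis, pick $x \in A_i \cap A_{i_0}$ and $y \in A_j \cap A_{i_0}$, and consider the (possibly degenerate) geodesic quadrilateral with vertices $p, x, y, q$.

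By the thin-quadrilateral property, the geodesic $[p,q]$ lies in the $2\delta$-neighborhood of $[p,x] \cup [x,y] \cup [y,q]$. Each of these three geodesic segments has both endpoints inside one of the $D$-quasiconvex sets $A_i$, $A_{i_0}$, $A_j$ respectively, so each of them lies in the $D$-neighborhood of $\bigcup_i A_i$. Combining the two containments gives $[p,q] \subset \CN_{D + 2\delta}\bigl(\bigcup_i A_i\bigr)$, so one can take $D' = D + 2\delta$, which indeed depends only on $D$ (and vacuously on $n$, since $\delta$ is universal).

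There is really no serious obstacle here; the only thing to verify is that the hypothesis of a common bridge $A_{i_0}$ is genuinely needed and used — without it, one could have a long chain $A_{i_1}, A_{i_2}, \ldots$ of pairwise intersecting sets whose union is not quasiconvex at all, as the accumulated error would blow up. The common intersection with $A_{i_0}$ caps the length of the connecting chain at two segments, which is exactly what allows the single application of hyperbolic thinness to close the argument. If one wanted to avoid citing $\delta$-hyperbolicity of $\BH^n$ directly, an alternative is to invoke Fact~\ref{quasiconvex} twice — first to put $\mathrm{hull}([p,x]\cup[x,y])$ into the $n$-neighborhood of itself via iterated hulls, and then again for the quadrilateral — but the thin-triangle route is cleaner and yields the sharper constant.
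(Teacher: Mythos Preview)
Your proof is correct and is essentially the same as the paper's: both form the geodesic quadrilateral $p,x,y,q$ with $x\in A_i\cap A_{i_0}$ and $y\in A_j\cap A_{i_0}$, observe that the three sides $[p,x],[x,y],[y,q]$ lie in the $D$-neighborhood of $\cup_i A_i$ by quasiconvexity, and then invoke $\delta$-hyperbolicity (thin quadrilaterals) to bound the fourth side. The paper states this in one sentence; you spelled out the details and the explicit constant $D+2\delta$.
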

\begin {proof}
Every geodesic with endpoints in $\cup_i A_i$ is one side of a geodesic quadrilateral whose other three sides lie in a $K $-neighborhood of $\cup_i A_i$. So, the fact follows from the (uniform) $\delta $-hyperbolicity of $\BH ^ n $.\end {proof}

Suppose that $\Gamma $ is a discrete subgroup of $\isometries (\BH ^ n) $.  The smallest nonempty $\Gamma $-invariant convex subset of $\BH ^ n $ is the convex hull of the limit set of $\Gamma $, which we  briefly introduced in \S \ref {preliminaries} and now refer to as $hull (\Gamma) $.  We will also need an enlargement of $hull(\Gamma )$ that contains all points on which $\Gamma$ acts with small displacement:  if $\epsilon>0 $ is smaller than the Margulis constant, define $$Thin_\epsilon (\Gamma) =\{x\in\BH ^n\ | \ \exists \, \gamma\in\Gamma \text { with } d (x, \gamma (x)) <\epsilon\} $$
and define the \emph {$\epsilon $-thickened convex hull} of $\Gamma$ to be the smallest nonempty $\Gamma$-invariant convex subset $hull_\epsilon (\Gamma) \subset \BH ^n $ that contains $Thin_\epsilon (\Gamma) .$ 

By $\Gamma $-invariance, the $\epsilon $-thickened convex hull contains the convex hull of $\Lambda (\Gamma) $; it also obviously contains $Thin_\epsilon (\Gamma) .$  Here is a coarse converse:

\begin {lemma}\label {thickconvexcore}
There is some universal $K$ such that $hull_\epsilon (\Gamma) $ is contained in the $K $-neighborhood of the union of $hull (\Gamma) $ and $Thin_\epsilon (\Gamma) .$
\end {lemma}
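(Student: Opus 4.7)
The plan is to exhibit $A := hull(\Gamma) \cup Thin_\epsilon(\Gamma)$ as a union of convex pieces that all meet the same base piece $hull(\Gamma)$, so that Facts \ref{unions} and \ref{quasiconvex} yield $hull(A) \subset \CN_D(A)$ for some $D = D(n)$. Since $A$ is $\Gamma$-invariant, so is $hull(A)$, which is then a $\Gamma$-invariant convex set containing $Thin_\epsilon(\Gamma)$. Thus the minimality of $hull_\epsilon(\Gamma)$ will force $hull_\epsilon(\Gamma) \subset hull(A) \subset \CN_D(A)$, which is precisely the claimed bound.

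Concretely, I would write $Thin_\epsilon(\Gamma) = \bigcup_{\gamma \in \Gamma \setminus \{1\}} T_\gamma$ with $T_\gamma := \{x \in \BH^n : d(x,\gamma x) < \epsilon\}$. The displacement function $x \mapsto d(x,\gamma x)$ is convex on $\BH^n$, so each $T_\gamma$ is convex, and $hull(\Gamma)$ is convex by definition. It then remains to check that each $T_\gamma$ meets $hull(\Gamma)$. Under the no-cusps assumption, every nontrivial $\gamma\in\Gamma$ is loxodromic or elliptic. If $\gamma$ is loxodromic, both of its endpoints on $\partial_\infty\BH^n$ lie in $\Lambda(\Gamma)$, so the axis of $\gamma$ sits inside $hull(\Gamma)\cap T_\gamma$. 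If $\gamma$ is elliptic, then $\gamma$ preserves the complete $\mathrm{CAT}(0)$ set $hull(\Gamma)$, and the Cartan fixed point theorem applied to the compact closure of $\langle\gamma\rangle$ produces a fixed point of $\gamma$ in $hull(\Gamma)\cap T_\gamma$.

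With these observations in hand, Fact \ref{unions} applied to the family $\{hull(\Gamma)\} \cup \{T_\gamma\}_{\gamma\neq 1}$ with distinguished member $hull(\Gamma)$ shows $A$ is $D'$-quasiconvex for some $D' = D'(n)$. Fact \ref{quasiconvex} then upgrades this to $hull(A) \subset \CN_{nD'}(A)$, and the minimality argument from the first paragraph delivers the lemma with $D = nD'$.

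The only genuine subtlety I expect is the elliptic case: when $\Gamma$ is torsion-free---as in the pseudo-Anosov setting motivating the paper---every nontrivial element is loxodromic and the axis argument suffices without invoking Cartan. The Cartan step is needed for the general statement, and it requires $hull(\Gamma)$ to be nonempty; the degenerate cases where $\Lambda(\Gamma)=\emptyset$ or $\Gamma$ is loxodromic-cyclic can be handled separately by inspection, since in each such case $hull_\epsilon(\Gamma)$ is already essentially equal to one of the convex pieces $T_\gamma$ or to $Thin_\epsilon(\Gamma)$ itself.
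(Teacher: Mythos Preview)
Your proposal is correct and follows essentially the same approach as the paper: the paper observes (via the Margulis lemma) that the components of $Thin_\epsilon(\Gamma)$ are convex and meet $hull(\Gamma)$ along their core axes, then applies Fact~\ref{unions}. Your decomposition into the individual sublevel sets $T_\gamma$ is slightly finer but achieves the same thing, and you are more explicit about the minimality step and the elliptic case; the core idea---convex pieces all touching $hull(\Gamma)$, then Facts~\ref{unions} and~\ref{quasiconvex}---is identical.
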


By the Margulis lemma, the components of $Thin_\epsilon (\Gamma) $ are all convex and all intersect $hull (\Gamma) $ along their core axes.  So, Lemma \ref {thickconvexcore} follows from Fact \ref {unions}.

\subsection {The geometry of (equivariant) carrier graphs}

We introduced carrier graphs in \S \ref {proof} as $\pi_1 $-surjective maps $f: X\longrightarrow M $, where $X $ is a finite graph and $M $ is a hyperbolic $n $-manifold.   Many arguments using carrier graphs are best performed in the universal cover, so we now introduce an equivariant version of carrier graphs in $\BH ^ n $. 

Suppose that $\Gamma  $ is a discrete subgroup of $\isometries (\BH ^ n) $. A \emph {$(\Delta, \Gamma) $-equivariant carrier graph} is a $(\Delta, \Gamma) $-equivariant map $$\tilde f :  \tilde X \longrightarrow \BH ^ n,$$ where $ \tilde X $ is a connected, locally finite graph and $\Delta < \mathrm {Aut} ( \tilde X)$ acts freely and cocompactly on $ \tilde X $.  For example, any carrier graph $ X \longrightarrow M $ for a hyperbolic $n$-manifold $M =\BH ^n / \Gamma $ lifts to an equivariant carrier graph for $\Gamma \circlearrowright \BH ^ n $.  Conversely, if $\tilde f : \tilde X \longrightarrow \BH ^ n $ is $(\Delta, \Gamma)$-equivariant, the quotient $\tilde X / \Delta$ is a finite graph and $\tilde f $ descends to a carrier graph $f: X / \Delta\longrightarrow \BH ^ n/\Gamma $.  So, there is a correspondence between carrier graphs and their equivariant analogues.

We now develop some basic relationships between a $(\Delta, \Gamma) $-equivariant carrier graph and the geometry of the action of $\Gamma $ on $\BH ^ n $.  All the following results can be interpreted within the quotient $\BH ^ n/\Gamma $, but they are most natural to prove in the equivariant setting.

\vspace {1mm}

First, when edges are sent to geodesic segments, the $\delta $-hyperbolicity of $\BH ^ n $ forces an equivariant carrier graph to pass near $hull_\epsilon (\Gamma) $.

\begin {lemma}\label {lemmaclose}
Suppose that $\tilde f: \tilde X \longrightarrow  \BH ^ n$ is a $(\Delta, \Gamma) $-equivariant carrier graph that maps each edge of $\tilde X$ to a geodesic segment in $\BH ^ n $, that $\alpha $ is an edge-path in $\tilde X $ from $x$ to $y$, and that $\gamma  (\tilde f (x)) =\tilde f (y)$ for some nontrivial $\gamma\in \Gamma $.  

Then there is a point on $\tilde f(\alpha) $ whose distance to $hull_\epsilon (\Gamma)$ is at most some constant depending only on $\epsilon $ and the number of edges in $\alpha $.
\end {lemma}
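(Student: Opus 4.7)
The plan is to combine $\delta$-hyperbolicity of $\mathbb{H}^n$ with the two-part structure of $hull_\epsilon(\Gamma)$ --- namely that it contains both $hull(\Lambda(\Gamma))$ and $Thin_\epsilon(\Gamma)$.

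Write $P = \tilde f(\alpha)$ for the $k$-piece broken geodesic from $p = \tilde f(x)$ to $q = \tilde f(y) = \gamma(p)$. The first reduction is to the straight chord: iterated thin-triangle arguments in the $\delta$-hyperbolic space $\mathbb{H}^n$ show that the geodesic segment $[p,q]$ lies in the $\lceil\log_2 k\rceil\delta$-neighborhood of $P$. So it suffices to find a point on $[p,q]$ whose distance to $hull_\epsilon(\Gamma)$ is bounded in terms of $n$ and $\epsilon$ alone; the nearby point of $P$ then witnesses the lemma after absorbing $\lceil\log_2 k\rceil\delta$ into the final constant.

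It then remains to bound $d([p,\gamma(p)], hull_\epsilon(\Gamma))$ by a constant depending only on $n$ and $\epsilon$. I would case on the isometry type of the nontrivial element $\gamma$: since $\Gamma$ has no parabolics in the relevant applications, $\gamma$ is either loxodromic with axis $A \subset hull(\Lambda(\Gamma)) \subset hull_\epsilon(\Gamma)$, or elliptic of finite order with totally geodesic fixed set $F$ contained in $\overline{Thin_\epsilon(\Gamma)} \subset hull_\epsilon(\Gamma)$. In either case the fixed locus of $\gamma$ is already inside $hull_\epsilon(\Gamma)$. Working in Fermi coordinates around $A$ (respectively polar coordinates around $F$), hyperbolic trigonometry gives an explicit formula for the minimum distance from the chord $[p,\gamma(p)]$ to this fixed locus, in terms of $d(p, A)$ (resp.\ $d(p,F)$) and the translation length $\tau(\gamma)$ (resp.\ rotation angle).

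The main obstacle is balancing these estimates to get a bound that is uniform in $\Gamma$ and in $\gamma$. Here the two-part definition of $hull_\epsilon(\Gamma)$ is essential: when $\tau(\gamma) < \epsilon$, the set $Thin_\epsilon(\Gamma) \subset hull_\epsilon(\Gamma)$ contains a Margulis tube around $A$ thick enough that the chord $[p,\gamma(p)]$ must meet it; when $\tau(\gamma)$ is much bigger than $\epsilon$, hyperbolic trigonometry forces $[p,\gamma(p)]$ itself to dip close to $A$, by the hyperbolic Pythagorean-type relation $\cosh d(p,A) = \cosh d(m,A)\cosh(\tau(\gamma)/2)$ for the midpoint $m$ of the chord. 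Interpolating these two regimes --- which corresponds precisely to the interplay between $hull(\Lambda(\Gamma))$ and $Thin_\epsilon(\Gamma)$ in the definition of the $\epsilon$-thickened convex hull --- is the technical heart of the argument and is where I expect the most care is needed.
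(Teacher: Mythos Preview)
Your outline is a workable strategy, but it is genuinely different from the paper's proof, and it carries some loose ends.

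The paper does \emph{not} reduce to the chord $[p,\gamma(p)]$ and does not case on the isometry type of $\gamma$. Instead it projects the endpoints $\tilde f(x),\tilde f(y)$ directly to $hull_\epsilon(\Gamma)$, obtaining feet $x',y'=\gamma(x')$, and closes up a $(k+3)$-gon with sides $\tilde f(\alpha)$, $[\tilde f(y),y']$, $[y',x']$, $[x',\tilde f(x)]$. Thin polygons put $[x',\tilde f(x)]$ in a $C(k,n)$-neighbourhood of the other sides; the right angle at $x'$ bounds how long it can track $[x',y']$. The single key observation is that if $\tilde f(x)\notin hull_\epsilon(\Gamma)$ then $x'\in\partial hull_\epsilon(\Gamma)$, hence $x'\notin Thin_\epsilon(\Gamma)$, hence $d(x',y')=d(x',\gamma x')\ge\epsilon$; this forces the two perpendiculars $[\tilde f(x),x']$, $[\tilde f(y),y']$ to diverge, so $[x',\tilde f(x)]$ can only track $[\tilde f(y),y']$ for a length bounded in terms of $n,\epsilon$. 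What remains must lie near $\tilde f(\alpha)$, and it includes points boundedly close to $x'\in hull_\epsilon(\Gamma)$.

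So the two-regime interplay you identify as ``the technical heart'' is, in the paper, compressed into the one inequality $d(x',\gamma x')\ge\epsilon$, valid uniformly for loxodromic, elliptic and parabolic $\gamma$ with no case split. Your route --- project to the axis or fixed set and balance the large-$\tau$ trigonometric dip against the small-$\tau$ Margulis tube --- can be pushed through, but note two corrections: the relation for the midpoint is $\tanh d(p,A)=\cosh(\tau/2)\tanh d(m,A)$ (in $\BH^2$ with no rotation), not the $\cosh$-Pythagorean formula you wrote; and you are not entitled to exclude parabolics, since the lemma is stated for arbitrary discrete $\Gamma$. The paper's argument sidesteps both issues.
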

\begin {proof}
Let $x', y' $ be the nearest point projections of $\tilde f (x),\tilde f (y) $ to $hull_\epsilon (\Gamma) $.  Form a geodesic polygon in $\BH ^ n $ by concatenating $\tilde f(\alpha) $ with the segments $$[\tilde f (y), y'], \ [y', x'] , \ [x',\tilde f (x)] .$$  If $k $ is the number of edges in $\alpha $, then by $\delta $-hyperbolicity, $[x',\tilde f (x)] $ is contained in a $C $-neighborhood of the other sides, for some $C = C (k) $.

The angle between  $[x',\tilde f (x)] $ and $[x',y']$ is at least $\pi/2 $.  So, $\CN_C([x',y'])$ can only contain a segment of $[x',\tilde f (x)] $ with length at most some $C' = C' (k) $.  

If $\tilde f(x) \in hull_\epsilon (\Gamma) $, we're done.  Otherwise, $x'\in\partial hull_\epsilon(\Gamma) $.  As $y'=\gamma (x') $ for some nontrivial $\gamma\in \Gamma $, the segment $[y',x'] $ has length at least $\epsilon $.  So, as both $[\tilde f (x), x'] $ and $[\tilde f (y), y'] $ intersect $[y',x']$ at an angle at least $\pi/2  $, $\ \CN_C ([\tilde f (y), y'] )$ can only contain a segment of $[x',\tilde f (x)] $ of length at most $C'' = C''(k,\epsilon) $.  

Therefore, there is a point on $[x',\tilde f (x)] $ at most $\max(C',C'') $ away from $x'$ that is within $C $ of $\tilde f (Y_i) $, which bounds the distance between $\tilde f (Y_i) $ and $hull_\epsilon (\Gamma) $ by a constant depending only on $k,\epsilon $.
\end {proof}

The \emph {$\epsilon $-thickened convex hull of $\tilde f (\tilde X) $} with respect to $\Gamma $, written $$hull_\epsilon (\tilde f (\tilde X),  \Gamma) ,$$ is the convex hull of the union of $\tilde f (\tilde X) $ and $hull_\epsilon (\Gamma) .$  Actually, the union is quasiconvex, so taking the convex hull does not change it dramatically:

\begin {proposition}\label {nonewshit}
Suppose that $\tilde f: \tilde X \longrightarrow  \BH ^ n$ is a  $(\Delta, \Gamma) $-equivariant carrier graph that maps each edge of $\tilde X$ to a geodesic segment in $\BH ^ n $.  Then there is some $K $ depending only on $n $, $\epsilon $ and the number $k $ of edges in $\tilde X / \Delta$ such that
$$hull_\epsilon (\Gamma) \cup \tilde f (\tilde X) $$ 
is $K $-quasiconvex.  Consequently,
$$hull_\epsilon (\Gamma) \cup \tilde f (\tilde X) \  \subset \ hull_\epsilon (\tilde f (\tilde X), \Gamma) \ \subset \ \CN_K \left(hull_\epsilon (\Gamma) \cup \tilde f (\tilde X) \right).$$ 
\end {proposition}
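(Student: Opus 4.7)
The plan is to realize $hull_\epsilon(\Gamma)\cup\tilde f(\tilde X)$ as a bounded-depth thickening of a union of $\Gamma$-translates of finitely many quasiconvex subgraphs, each meeting $hull_\epsilon(\Gamma)$, so that Fact \ref{unions} assembles the pieces. The central tool for anchoring each orbit to $hull_\epsilon(\Gamma)$ will be Lemma \ref{lemmaclose}.

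Since $\tilde X/\Delta$ has $k$ edges, the edges of $\tilde f(\tilde X)$ decompose into at most $k$ orbits under $\Gamma$; pick one representative $\tilde e_i$ of each, and write $e_i$ for its image in $\tilde X/\Delta$. Assume $\Gamma\neq\{1\}$. The first step is to build, for every $i$, an edge-loop $\alpha_i'$ in $\tilde X/\Delta$ with at most $O(k)$ edges that uses $e_i$ and whose $\pi_1$-class has nontrivial $f_*$-image in $\Gamma$. Since $f_*$ surjects onto a nontrivial group and the $\pi_1$ of a graph with $k$ edges is generated by simple cycles of length at most $k$, a simple cycle $\gamma_0$ with $f_*[\gamma_0]\neq 1$ exists. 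Concatenating a traversal of $e_i$, a path of length at most $k$ from an endpoint of $e_i$ to the basepoint of $\gamma_0$, the cycle $\gamma_0$, the reverse path, and the reverse traversal of $e_i$ produces $\alpha_i'$, whose class is conjugate to $[\gamma_0]$ and hence $f_*$-nontrivial. Lifting $\alpha_i'$ to $\tilde X$ starting at the initial vertex of $\tilde e_i$ yields an edge-path $\alpha_i$ through $\tilde e_i$ whose endpoints differ by some $\delta_i\in\Delta$ with $f_*(\delta_i)\neq 1$.

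Applying Lemma \ref{lemmaclose} to $\alpha_i$ produces a point $q_i\in\tilde f(\alpha_i)$ within distance $C_1=C_1(n,\epsilon,k)$ of $hull_\epsilon(\Gamma)$. Let $\beta_i$ be the geodesic from $q_i$ to a nearest point of $hull_\epsilon(\Gamma)$, and set $A_i:=\tilde f(\alpha_i)\cup\beta_i$. Then $A_i$ is a connected tree of at most $O(k)$ geodesic segments in $\BH^n$, contains $\tilde f(\tilde e_i)$, and meets $hull_\epsilon(\Gamma)$. A thin-triangle induction on bends—in the $\delta$-hyperbolic space $\BH^n$, the geodesic between any two points of a piecewise-geodesic tree with $b$ bends lies within $b\delta$ of the tree—shows that $A_i$ is $D_0$-quasiconvex with $D_0=D_0(n,k)$.

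Finally, for each $i$ the $\Gamma$-translates $\{\gamma A_i:\gamma\in\Gamma\}$ are all $D_0$-quasiconvex and each meets the $\Gamma$-invariant convex set $hull_\epsilon(\Gamma)$, so Fact \ref{unions} makes $\Gamma A_i\cup hull_\epsilon(\Gamma)$ into a $D_1$-quasiconvex set; a second application of Fact \ref{unions} over $i=1,\dots,k$, using $hull_\epsilon(\Gamma)$ as the common intersector, yields a $D_2$-quasiconvex set $B:=\bigcup_i\Gamma A_i\cup hull_\epsilon(\Gamma)$ with $D_2=D_2(n,\epsilon,k)$. Because each $|\beta_i|\le C_1$, the inclusions $\tilde f(\tilde X)\cup hull_\epsilon(\Gamma)\subset B\subset\CN_{C_1}(\tilde f(\tilde X)\cup hull_\epsilon(\Gamma))$ hold, and a standard sandwich argument upgrades the $D_2$-quasiconvexity of $B$ to $(D_2+C_1)$-quasiconvexity of the inner set; the final inclusion in the proposition then follows from Fact \ref{quasiconvex}. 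The main obstacle is the quasiconvexity estimate for $A_i$—the tree may contain arbitrarily long geodesic edges, but only its bounded number of bends matters for the thin-triangle induction, a point that must be stated carefully.
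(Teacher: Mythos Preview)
Your proof is correct and uses the same three ingredients as the paper: quasiconvexity of a connected finite union of geodesic segments via $\delta$-hyperbolicity, Lemma~\ref{lemmaclose} to anchor pieces to $hull_\epsilon(\Gamma)$, and Fact~\ref{unions} to assemble. The organization, however, is more elaborate than necessary. The paper simply chooses a \emph{connected fundamental domain} $Y_0 \subset \tilde X$ for the $\Delta$-action and lets the pieces be the images $\tilde f(Y_i)$ of its translates. Each $\tilde f(Y_i)$ is already a connected union of exactly $k$ geodesic segments (hence $C(k,n)$-quasiconvex), and since $Y_0$ contains two vertices identified by a $\delta\in\Delta$ with nontrivial image in $\Gamma$, Lemma~\ref{lemmaclose} applied to a simple path between them shows $\tilde f(Y_i)$ passes within $C'(n,k,\epsilon)$ of $hull_\epsilon(\Gamma)$. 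One application of Fact~\ref{unions} to $hull_\epsilon(\Gamma)$ together with the $C'$-neighborhoods of the $\tilde f(Y_i)$ then finishes. This avoids your per-edge loop construction, the extra connector segments $\beta_i$, the second pass of Fact~\ref{unions}, and the sandwich argument, though of course all of those steps are valid. Your construction of $\alpha_i'$ does make explicit a point the paper leaves implicit: one must choose the identified boundary vertices of $Y_0$ so that the identifying element of $\Delta$ maps \emph{nontrivially} to $\Gamma$, which is possible since these elements generate $\Delta$ and $\Delta$ surjects onto the nontrivial group $\Gamma$.
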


\begin {proof}
The point is to show that $hull_\epsilon (\Gamma) \cup \tilde f (\tilde X) $ is $K/n $-quasiconvex, for then the second statement follows from Fact \ref {quasiconvex}.

Let $Y_i \subset\tilde X$, where $i = 1, 2, \ldots $, be all the translates of a connected fundamental domain $Y_0 $ for the cocompact action $\Delta\circlearrowright \tilde X $.  Each $\tilde f (Y_i) $ is a connected union of $k $ geodesic segments in $\BH ^ n $, and therefore is $C$-quasiconvex for some $C =C(k,n)$, by the $\delta $-hyperbolicity of $\BH ^ n $.  Applying Lemma \ref {lemmaclose} to a simple path connecting (extremal) vertices of $Y_i $ that are identified by the $\Delta $-action, we see that a $C'$-neighborhood of each $\tilde f (Y_i) $ intersects $hull_\epsilon(\Gamma) $, where $C'=C'(n,k,\epsilon) $.

Applying Fact \ref {unions} to $hull_\epsilon (\Gamma) $ and these $C' $-neighborhoods then shows that $hull_\epsilon (\Gamma) \cup \tilde f (\tilde X) $ is $K $-quasiconvex for some $K =K(k,n,\epsilon) $.
\end {proof}

\subsection {Edge moves and carrier graphs of minimal length}
\label{edgemovesection}

Let $f : X \longrightarrow M$ be a carrier graph in a hyperbolic $n$-manifold $M $.  As in Figure~\ref {edgemove}, an \emph {edge move} on $f$ creates a new carrier graph $g : Y \longrightarrow M $ as follows:

\begin {figure}[h]
\centering
\includegraphics {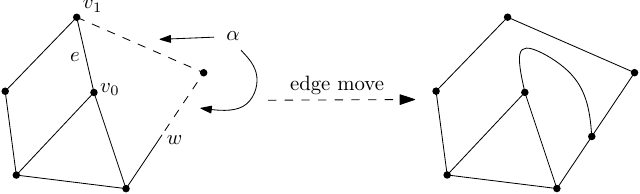}
\caption {An \emph {edge move} drags one endpoint of $e$ along the path $\alpha $.}
\label {edgemove}
\end {figure}

\begin {enumerate}[rightmargin=1cm] 
\item Pick an edge $e \subset X$ with vertices $v_0,v_1$ and a path $\alpha $ in $X\setminus e $ from $v_1$ to some point $w \in X$.  
\item Create a new graph $Y $: sever the attachment of $e$ to $v_1$, create an additional vertex at $w$ and reattach the free endpoint of $e$ to $w$.  
\item Define $g : Y\longrightarrow M $: set $g = f$ on $Y \setminus e$,  and send $e$ to a path from $f(v_0) $ to $f(w)$ in the homotopy class of $f(e) \cdot f(\alpha) $.
\item Homotope $g$ while fixing all the vertices of $Y$, except possibly $w$.
\end {enumerate}
Edge moves have a natural interpretation in the equivariant setting as well: if $\tilde f : \tilde X \longrightarrow M$ is a $(\Delta, \Gamma)$-equivariant carrier graph, an edge move on $\tilde f$ slides one endpoint of an edge $e \subset \tilde X $ along a path in $\tilde X \setminus \Delta e$ and then repeats this equivariantly for every edge in the $\Delta$-orbit $\Delta e$.

\begin {definition} We say that a carrier graph (equivariant or not) has \emph {minimal length rel vertices} if it does not admit a length-decreasing edge move.
\end {definition}

This version of minimality is strictly weaker than that used in \cite{Biringergeometry, Biringerfiniteness}, in which two carrier graphs $f_i : X_i\longrightarrow M $, $i=1,2$ were said to be \emph {equivalent} if there is a homotopy equivalence $h :X_1 \to X_2 $ such that $f_1 \simeq f_2 \circ h$, and \emph {minimal length} meant minimal in an equivalence class.  The advantage of the new definition is that it is inherited by subgraphs, since any length-reducing edge move on a subgraph clearly extends to such a move on the supergraph.  Note that in contrast to `equivalence', edge moves are not always reversible.

Most of the key geometric properties of the minimal length carrier graphs of \cite{Biringergeometry, Biringerfiniteness,Whiteinjectivity} still apply to carrier graphs with minimal length rel vertices.  In particular, we have the following adaptation of a result of White:

\begin{proposition}[see \cite{Whiteinjectivity}]
\label{minimallengths}
Let $f : X \to M$ be a carrier graph with minimal length rel vertices in a hyperbolic $n$-manifold $M$.  Then $X$ is at most trivalent and each edge maps to a geodesic segment in $M$.  The angle between any two adjacent edges is at least $\frac{2\pi}{3}$, and the image of any simple closed path in $X$ is either a point or an essential loop in $M$.
\end{proposition}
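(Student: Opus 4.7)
The plan is to treat each assertion separately, deriving trivalence from the angle bound and using direct edge-move arguments for the remaining items. Two of the items are relatively easy. First, straightening any edge to a geodesic in its homotopy class rel endpoint vertices is length non-increasing and preserves the carrier graph, so every edge of a minimal-length carrier graph maps to a geodesic segment. Second, for the simple-closed-path statement I would argue by contradiction: if $p\subset X$ is a simple closed path with $f(p)$ null-homotopic and non-constant, choose an edge $e\subset p$ with endpoints $v_0,v_1$ and let $\alpha$ be the rest of $p$, a path in $X\setminus e$ from $v_1$ to $v_0$. The edge move sliding $v_1$ along $\alpha$ to $v_0$ produces a new edge in the homotopy class of $f(e)\cdot f(\alpha)=f(p)$, which is null-homotopic, hence homotopes rel its (now coincident) endpoints to a constant loop of length zero, contradicting minimality.

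The angle bound is the heart of the proof. Fix a vertex $v$, pick two distinct edges $e_i,e_j$ incident to $v$, and let $u_i,u_j\in T_vM$ be their unit tangent vectors at $v$. I would consider the edge move that slides the endpoint of $e_j$ at $v$ a small distance along $e_i$, creating a new vertex $w$ on $e_i$. By step (4) of the edge-move definition, $w$ may then be repositioned in a neighborhood of $v$ by a homotopy fixing all other vertices, so after straightening we may take $w=\exp_v(\epsilon\vec d)$ for any unit $\vec d\in T_vM$. Writing $v_i',v_j'$ for the far endpoints of $e_i,e_j$, a first-order expansion gives the total length change
\begin{equation*}
d(v_i',w)+d(w,v)+d(v_j',w)-d(v_i',v)-d(v_j',v)=\epsilon\bigl(1-(u_i+u_j)\cdot\vec d\bigr)+O(\epsilon^2).
\end{equation*}
Choosing $\vec d$ parallel to $u_i+u_j$ yields the minimal change $\epsilon(1-|u_i+u_j|)+O(\epsilon^2)$, which must be non-negative by minimality rel vertices. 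Hence $|u_i+u_j|\leq 1$, equivalently $u_i\cdot u_j\leq -\tfrac12$, so the angle $\theta_{ij}$ between $e_i$ and $e_j$ at $v$ is at least $\tfrac{2\pi}{3}$.

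Trivalence then follows from a short linear-algebra estimate: if $v$ has degree $d$ with unit tangents $u_1,\dots,u_d$ satisfying $u_i\cdot u_j\leq -\tfrac12$ for all $i\neq j$, then
\begin{equation*}
0\leq \Bigl|\sum_{i=1}^d u_i\Bigr|^2 \,=\, d+2\sum_{i<j}u_i\cdot u_j \,\leq\, d-\binom{d}{2},
\end{equation*}
forcing $d\leq 3$. The main obstacle I anticipate is rigorously justifying the freedom afforded by step (4) of the edge move to reposition $w$ in an arbitrary tangent direction at $v$, rather than merely along the initial segment of $e_i$ on which $w$ was created. This should reduce to uniqueness of geodesic representatives in each homotopy class of arcs, applied in the universal cover, but one must check that the two pieces of $e_i$ incident to $w$ together with the relocated $e_j$ can be simultaneously re-straightened to honest geodesics as $w$ moves throughout a neighborhood of $v$, so that the first-variation calculation is actually realized by admissible edge moves.
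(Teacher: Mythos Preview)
Your argument is correct and matches the paper's approach: the paper uses the same ``folding'' edge move near a vertex for the angle bound (leaving the first-variation computation as a ``quick Euclidean computation''), deduces trivalence from the angle bound, and handles the null-homotopic loop by exactly the slide-and-collapse move you describe. Your worry about step~(4) is unfounded: the definition explicitly permits homotoping $g$ while moving $g(w)$ freely, so after placing $g(w)=\exp_v(\epsilon\vec d)$ you may re-straighten each of the three incident edges to its geodesic representative rel endpoints, and your first-variation inequality is then realized by an honest edge move.
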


In particular, if $X$ has no valence $1$ or $2$ vertices, it is trivalent with all angles equal to $\frac{2\pi}3$ and has $3(\rank(\pi_1 X)-1)$ edges, by an Euler characteristic count.

{\begin {figure}[h]
\centering
\includegraphics {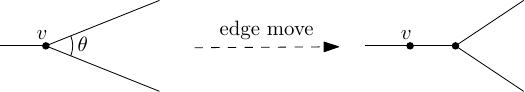}
\caption {As long as the angle $\theta < 2\pi/3$, folding part of the two edges together is a length-decreasing edge move.}
\label {angles}
\end {figure}}

To prove the proposition, if two adjacent edges meet at an angle $\theta <\frac {2\pi}{3}$, then the edge move shown in Figure~\ref{angles} decreases length.  (Since it suffices to perform the move arbitrarily close to the vertex, this can be checked with a quick Euclidean computation.)  It follows that $X$ is at most trivalent, and at every trivalent vertex the angles are all equal to $\frac{2\pi}{3}$.  Finally, if a simple closed path $\alpha$ in $X$ maps homotopically trivially to $M $, then either $\alpha $ maps to a point or there is a length-decreasing edge move that replaces some edge $e \subset \alpha$ with a loop based at one of its vertices that maps to a point in $M$.

All the topology of a hyperbolic $n $-manifold is contained in its convex core, outside of which its geometry exponentially expands.  Therefore, it should be most length-efficient for a carrier graph to stay near the convex core:

\begin {proposition}[Minimal length graphs orbit the convex core]\label {stayclose}
Suppose that $f: X\longrightarrow M$ is a reduced carrier graph with minimal length rel vertices in a hyperbolic $n $-manifold $M $.  If $core(X)$ is the Stallings core of $X$, then we have
$$ f (core (X) )\subset \CN_K (core_\epsilon (  M)), $$
where $K $ is a constant depending on $n $, $\epsilon $ and the number of edges in $core (X) $.
\end {proposition}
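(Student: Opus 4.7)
The plan is to lift to the universal cover and show that any vertex of the preimage of $core(X)$ lying far from $hull_\epsilon(\Gamma)$ would produce a simple cycle in $core(X)$ whose lift stays uniformly far from $hull_\epsilon(\Gamma)$, contradicting Lemma~\ref{lemmaclose}. Let $\tilde f: \tilde X \to \BH^n$ be a $(\Delta,\Gamma)$-equivariant lift of $f$, and let $\tilde Y \subset \tilde X$ be the $\Delta$-invariant preimage of $core(X)$; since $core(X) \hookrightarrow X$ is a $\pi_1$-isomorphism, $f|_{core(X)}$ is itself a carrier graph and $\tilde Y$ is $(\Delta,\Gamma)$-equivariant with $\tilde Y/\Delta = core(X)$. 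By Proposition~\ref{minimallengths}, after absorbing any valence-$2$ vertices into their neighboring edges, $core(X)$ is trivalent with at most $k = 3(\rank(\pi_1 X) - 1)$ edges and $2(\rank(\pi_1 X) - 1)$ vertices, each edge maps to a geodesic segment, and adjacent edges meet at the extremal angle $2\pi/3$.

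The key local fact is that this tight angle condition forces the three unit outgoing directions $\vec u_1,\vec u_2,\vec u_3$ at every trivalent vertex of $\tilde Y$ to sum to zero: indeed, $|\vec u_1+\vec u_2+\vec u_3|^2 = 3 + 2\sum_{i<j}\vec u_i\cdot\vec u_j \leq 3 + 2\cdot 3\cdot(-\tfrac12) = 0$ forces equality throughout. Suppose toward a contradiction that some $\tilde v_0 \in \tilde Y$ has $R := d(\tilde f(\tilde v_0), hull_\epsilon(\Gamma))$ exceeding the constant $C(n,\epsilon,k)$ furnished by Lemma~\ref{lemmaclose} for edge-paths with at most $k$ edges. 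I then inductively build a walk $\tilde v_0,\tilde v_1,\ldots$ in $\tilde Y$ along which $d(\cdot,hull_\epsilon(\Gamma))$ is non-decreasing. Writing $\vec d_i$ for the unit direction at $\tilde f(\tilde v_i)$ toward $hull_\epsilon(\Gamma)$, the inductive invariant is that the backward direction $\vec v$ at $\tilde v_i$ satisfies $\vec v\cdot\vec d_i \geq 0$; the sum-to-zero identity then yields $\vec u_2\cdot\vec d_i + \vec u_3\cdot\vec d_i = -\vec v\cdot\vec d_i \leq 0$, so some outgoing direction $\vec u$ has $\vec u\cdot\vec d_i \leq 0$. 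Following this edge, convexity of $p \mapsto d(p,hull_\epsilon(\Gamma))$ along the geodesic propagates both the non-decrease of $d$ and the invariant $\vec v\cdot\vec d_{i+1}\geq 0$ to the next step.

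After at most $2(\rank(\pi_1 X) - 1)+1$ steps, the projected walk in $core(X)$ must revisit a vertex, yielding a simple loop $\alpha \subset core(X)$ with at most $k$ edges. By the reducedness hypothesis together with Proposition~\ref{minimallengths}, $\alpha$ maps to an essential loop in $M$, so its lift $\tilde\alpha \subset \tilde Y$ connects some $\tilde v_j$ to $\gamma \tilde v_j$ with nontrivial $\gamma \in \Gamma$. Lemma~\ref{lemmaclose} then furnishes a point on $\tilde f(\tilde\alpha)$ within $C(n,\epsilon,k)$ of $hull_\epsilon(\Gamma)$, contradicting the lower bound $\geq R$ maintained along the walk. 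Hence every vertex of $\tilde f(\tilde Y)$ lies within $D := C(n,\epsilon,k)$ of $hull_\epsilon(\Gamma)$; and since $d(\cdot,hull_\epsilon(\Gamma))$ is convex along each geodesic edge, its maximum on an edge is attained at an endpoint, so every point of $\tilde f(\tilde Y)$ also lies within $D$ of $hull_\epsilon(\Gamma)$. Quotienting by $\Gamma$ yields $f(core(X)) \subset \CN_D(core_\epsilon(M))$.

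The main obstacle I anticipate is dealing with any valence-$2$ vertices that may remain in $core(X)$: at such a vertex the sum-to-zero identity fails, and the unique outgoing edge may have positive inner product with $\vec d_i$, so $d$ can decrease along the next geodesic piece and the walk could dip below $R$. The natural fix is to show that a valence-$2$ vertex whose angle is strictly less than $\pi$ admits a length-decreasing sequence of edge moves (sliding one edge across the other and re-routing), forcing valence-$2$ vertices in a minimal-length graph to be straight and hence absorbable into their neighboring edges without affecting the geodesic structure; the walk argument then applies verbatim to the smoothed trivalent graph.
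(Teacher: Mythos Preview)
Your argument is correct and takes a genuinely different route from the paper's. The paper argues by contradiction against the minimality hypothesis directly: if $f(core(X))$ strays far from $core_\epsilon(M)$, pigeonhole on edges produces a geodesic subsegment $\tilde e'$ whose endpoints lie far from $hull_\epsilon(\Gamma)$ but at very different distances; Lemma~\ref{lemmaclose} then supplies a simple path $\tilde\alpha$ in $\tilde X$ from the far endpoint back toward the hull, and $\delta$-hyperbolicity of the resulting geodesic polygon forces some edge of $\tilde\alpha$ to fellow-travel a long piece of $\tilde e'$, yielding an explicit length-decreasing edge move (Figure~\ref{coremove}). You instead extract only the $2\pi/3$ angle consequence of minimality and run a maximum-principle argument: the sum-to-zero identity makes the convex function $h=d(\cdot,hull_\epsilon(\Gamma))$ ``network-subharmonic'' on the trivalent graph, so from any far vertex one can walk without backtracking along non-decreasing $h$; since $core(X)$ has no leaves the projected walk eventually closes into a simple cycle that stays far from the hull, and reducedness plus Proposition~\ref{minimallengths} makes that cycle essential, contradicting Lemma~\ref{lemmaclose}. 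The paper's approach never uses the angle structure and so is insensitive to valence-$2$ vertices; yours is more elementary in that it never constructs an edge move, relying only on convexity of $h$ and the local balancing identity. Your anticipated fix for valence-$2$ vertices is correct: taking $\alpha$ to be the constant path at $v_1$ in the edge-move definition lets step~(4) move $v_1$, so a valence-$2$ angle strictly less than $\pi$ violates minimality and straight valence-$2$ vertices can be absorbed into geodesic edges, after which your walk argument applies verbatim.
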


A carrier graph $f: X\longrightarrow M$ is \emph {reduced} if no simple closed curve on $X$ is mapped to a point in $M $.  Any carrier graph can be reduced by collapsing the problematic loops in $X$, so the above still has content for nonreduced graphs. 

The \emph{Stallings core} of $X$ is the smallest subgraph of $X $ whose inclusion is a homotopy equivalence, which is obtained by removing each edge from $X $ whose interior divides $X $ into two components, one of which is a tree.  

Finally, $core_\epsilon (  M) $ is the \emph{$\epsilon$-thickened convex core} of $M $, which is defined as the smallest convex subset of $  M$ that contains the $\epsilon $-thin part of $  M $ and whose inclusion into $  M $ is a homotopy equivalence.  If $  M =\BH ^  n/\Gamma$, then $core_\epsilon (  M)$ is exactly the projection to $  M$ of the $\epsilon $-thickened convex hull of $\Gamma $.

\begin {proof} [Proof of Proposition \ref {stayclose}]
In order to make the argument more readable, we'll omit reference to specific constants and understand that `bounded' means in terms of $n,\epsilon$ and the number of edges in $core (X)$, and that `large' and `far' mean much larger and further than all the bounded constants appearing below.

Suppose that $  f (core (X) )$ strays very far from $core_\epsilon (  M)$.  As there are only so many edges, some subsegment $e' $ of an edge $e \subset X $ must have endpoints with radically different distances to $core_\epsilon (  M)$, such that both distances are very large.  

Write $M=\BH ^ n/\Gamma $, lift $f |_{core (X)}$ to a $(\Delta,\Gamma) $-equivariant carrier graph $$\tilde f :  \widetilde{core (X)} \longrightarrow \BH ^ n, $$ choose lifts $\tilde e' \subset \tilde e \subset \tilde X $ of $e' \subset e\subset X$, let $\tilde v_0$ and $\tilde v_1$ be the endpoints of $\tilde e'$, and assume that $\tilde v_1$ lies further from $hull_\epsilon (\Gamma) $ than $\tilde v_0$.  

We claim there is a path $\tilde \alpha : [0, 1] \longrightarrow \tilde X $ with $\tilde \alpha (0) =\tilde v_1$ such that 
\begin {enumerate}
\item $\tilde \alpha(1)$ lies at bounded distance from $hull_\epsilon (\Gamma)$, 
\item the projection of $\tilde e' \cdot \tilde \alpha$ to $X$ is a simple path.
\end {enumerate} 
By Lemma \ref{lemmaclose}, any simple closed curve in $X$ must come within a bounded distance of $core_\epsilon (M)$.  If $e$ does not separate $X$, we can take $\tilde \alpha$ to be some segment of a lift of a simple closed curve in $X$ that contains $e$.  In the other case, $e$ separates $X$ into two components; let $Y $ be the component adjacent to $v_1$, the projection of $\tilde v_1 $.  As $e $ is in $core (X)$, there is a simple closed curve $\beta $ in $Y$, and we can take $\tilde \alpha $ to be the lift of a simple path in $Y $ from $v_1$  to a point on $\beta$ that lies at bounded distance from $core_\epsilon (M) $.

\begin {figure}
\centering
\includegraphics {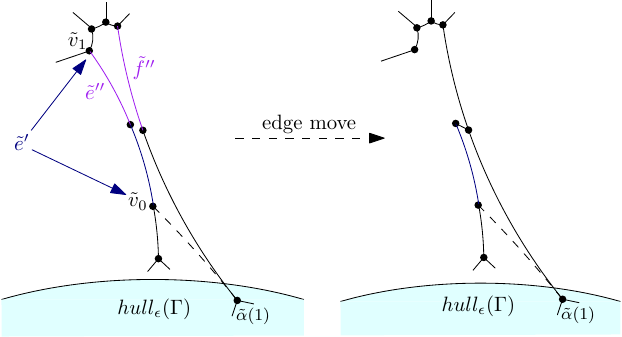}
\caption {The two segments $\tilde e''$ and $\tilde f''$ are very long and boundedly fellow travel, so the above edge move is length-decreasing.}
\label {coremove}
\end {figure}
Form a geodesic polygon in $\BH ^ n$ by concatenating 
$\tilde e' $, $ \tilde \alpha$ and the geodesic segment $[\tilde\alpha (1), \tilde v_0] $.  Divide $\tilde e'$ in half, and let $\tilde e''$ be the half adjacent to $\tilde v_1$.  By $\delta $-hyperbolicity, there is some subsegment $\tilde f''$ of an edge of $\tilde \alpha \cup [\tilde\alpha (1), \tilde v_0] $ that boundedly fellow travels a very long segment of $\tilde e''$.  By convexity, 
$$d_{\BH ^ n}\big(x,hull_\epsilon(\Gamma)\big) \leq d _{\BH ^ n}\big(\tilde v_0,hull_\epsilon(\Gamma) \big),\ \  \forall x\in [\tilde\alpha (1), \tilde v_0]. $$  So, as $\tilde e''$ lies much further from $hull_\epsilon(\Gamma) $ than $\tilde v_0$, it cannot be that $\tilde f''\subset [\tilde{\alpha}(1), \tilde{v}_0]$.  Therefore, $\tilde f''$ is part of an edge of $ \tilde X$.  As illustrated in Figure~\ref {coremove}, there is then an edge move on $\tilde X$ that dramatically decreases length.
\end {proof}

\label {minimallengthsection}

\subsection {Relative length}\label {relativelengthsection}
We now introduce the notion of relative length referenced in \S\ref {proof}, first in the equivariant setting and then for carrier graphs in $3$-manifolds.  

Suppose that $\tilde f : \tilde X \longrightarrow \BH ^ n $ is a $(\Delta, \Gamma) $-equivariant carrier graph with geodesic edges, where $\Gamma < \isometries (\BH ^ n) $.  If $\tilde Y $ is a connected subgraph of $ \tilde X $ that is stabilized cocompactly by some subgroup of $\Delta$, then $\tilde f$ restricts to an equivariant carrier graph $\tilde Y \longrightarrow \BH ^ n$ for the subgroup $\Gamma_{\tilde Y} < \Gamma $ that stabilizes $\tilde f (\tilde Y) $.  

\begin {definition}[Equivariant relative edge length] If a vertex of an edge $\tilde e$ of $\tilde X$ lies in $\tilde Y$, we define the \emph{length of $\tilde e $ relative to $\tilde Y$ and $\epsilon $} to be the hyperbolic length of the part of $\tilde e $ that lies outside of a radius-$1$ neighborhood of $hull_\epsilon (\tilde f (\tilde Y) ,\Gamma_{\tilde Y}) $.

Moreover, if $\tilde Y_0 $ and $\tilde Y_1 $ are both subgraphs as above and the two vertices of $\tilde e$ lie in $\tilde Y_0 $ and $\tilde Y_1 $, respectively, we define the \emph {length of $\tilde e $ relative to $\tilde Y_0,\tilde Y_1$ and $\epsilon $} as the hyperbolic length of the part of $\tilde e$ the lies outside the radius-$1$ neighborhoods of both of the associated $\epsilon $-thickened convex hulls.
\end {definition}

Note that the length of an edge of $\tilde Y $ relative to $\tilde Y$ is zero. However, we are really interested in the case that $\tilde e $ is an edge of $\tilde X \setminus \tilde Y$ but has a vertex in $\tilde Y$. 

Let's extend this definition to the non-equivariant case.  Suppose that $$f : X \longrightarrow M $$ is a carrier graph with geodesic edges in a hyperbolic $n$-manifold $M $ and that $Y\subset X $ is some subgraph.  If $e$ is an edge of $X $ that is adjacent to $Y $, lift $e $ to an edge $\tilde e$ in an equivariant carrier graph $\tilde  f: \tilde X \longrightarrow \BH ^ n $, where $\pi: \tilde X \longrightarrow X $ is the universal cover.  We define the \emph{length of $e$ relative to $ Y$
 and $\epsilon $}, written $$\length_{Y,\epsilon} (e) $$ as the length of $\tilde e$ relative to $\epsilon $ and any components of $\pi ^ {- 1}(Y) $ that $\tilde e$ touches.  The \emph {relative length of a subgraph $Z \subset X$}, written $\length_{Y,\epsilon} (Z)$, is then defined to be the sum of the relative lengths of its edges.

This definition is a slight extension of that given in \cite {Biringergeometry}, where it was always assumed that $\epsilon = 1 $.  This restriction does make a real difference in relative length, not just a difference that is bounded by a function of $\epsilon $.  However, as you might expect, the choice $\epsilon = 1 $ was arbitrary and the proof from \cite{Biringergeometry} of the following proposition, previously stated in \S \ref {proof}, goes through unchanged.

\begin{repproposition}{chainsprop}[Chains of Bounded Length, \cite {Biringergeometry}] 
Fixing $\epsilon>0 $, if $M$ is a closed hyperbolic $n$-manifold such that $\pi_1 M $ does not split as a free product, e.g.\ if $M $ is closed, and $f: X \to M$ is a carrier graph with minimal length rel vertices, there is a sequence of (possibly disconnected) subgraphs $$\emptyset = Y_0 \subset Y_1 \subset \ldots \subset Y_k = X$$ such that $\length_{Y_i,\epsilon}(Y_{i+1}) < L$ for all $i$, where $L=L(\epsilon,n) $.
\end{repproposition}

Note the assumption in Proposition \ref {chainsprop} that $\pi_1 M $ does not split as a free product.  Without this, $M$ could be the quotient of $\BH ^ n $ by a Schottky group generated by two hyperbolic-type isometries that both have large translation distance and whose axes are far apart. In this case, a minimal length, rank 2 carrier graph in $M$ might look like the union of the two closed geodesics in $M $ corresponding to the generators and a geodesic segment connecting them, modified so that all angles are $2\pi/3$.  In particular, there may not be any short edges at all.  The basic idea of the proof is to show inductively that if there are no edges in $X \setminus Y_i$ with short relative length, then $\pi_1 M$ splits as a free product of the images of the fundamental groups of the components of $Y_i $, which is impossible if $M $ is closed.

\begin {remark}
The reader may wonder why the `radius $1$-neighborhood' is needed in the definition of relative length. As in  \cite {Biringergeometry}, the $1$ here is arbitrary, but a different choice will affect the constants in Proposition~\ref {chainsprop}. The issue is that an edge of $Y_{i+1}$ could exit the convex hull of $Y_i$ at a very small angle, and then stay close for a long time. After exiting a neighborhood of definite radius, however, the remaining length will be bounded.
\end {remark}

\section {The proof of Lemma \ref {hullbounds}}
\label {lemmasection}
To remove a level of notational complexity, we assume in this section that our carrier graphs are embedded.  This does not simplify any of the proofs, but allows us to omit reference to $f$ and $\tilde f$, which makes the arguments more readable.  

We also encourage the reader to read through \S\ref {carriergraphsection} before continuing, since we will make heavy use of notation introduced there.

\vspace{1mm}

Let's recall the setup from Section \ref {proof}.  Fix a homeomorphism $\phi : S \longrightarrow S$ of a closed surface of genus $g$, and suppose that $X \hookrightarrow M_\phi$ is a minimal length carrier graph with $\rank (\pi_1 X) =m$.  From Proposition \ref {chainsprop}, we have a sequence  $$\emptyset = Y_0 \subset Y_1 \subset \ldots \subset Y_k = X$$ such that $\length_{Y_i,\epsilon}(Y_{i+1}) < L$ for all $i$, where $L=L(\epsilon,m)$ is some constant. Let $d_\epsilon $ be the pseudometric on $M_\phi $ obtained by electrifying its $\epsilon $-thin parts.

 The remaining task is to prove Lemma \ref {hullbounds}, rephrased with $X $ embedded.

\begin {replemma}{hullbounds}
Fixing $i$, suppose that for each connected component $Z \subset Y_i $, the inclusion-induced image of $\pi_1 Z $ is an infinite index subgroup of $\pi_1 S \lhd \pi_1 M_\phi $.  

Then the $d_{\epsilon} $-diameter of every component of $Y_{i+1}$ in $M_\phi $ is bounded above by a constant depending on $g,m,\epsilon $ and the $d_{\epsilon} $-diameters of the components of $Y_i $.
\end {replemma}
\begin {proof}
We may assume that $X $ does not have any valence $1$ or $2$ vertices.  By Proposition \ref {minimallengths}, $X$ is trivalent.  As $\rank(\pi_1 X) =m$, the graph $X $ has $3m-3$ edges.  So, it suffices to bound the $d_\epsilon $-diameter of each edge $e$ of $Y_{i+1}$ individually.  

We know that the length of $e$ is bounded relative to $Y_i$ and $\epsilon $.  The parts of $e$ that do not count towards relative length lie in (radius-$1$ neighborhoods of) the projections to $M_\phi$ of  certain convex sets in $\BH^3$.
So, to bound the $d_\epsilon $-diameter of $e $, it suffices to bound the $d_\epsilon $-diameters of these projections.

Fix a universal cover $\BH^3\longrightarrow M_\phi$, let $\tilde Z $ be a component of the preimage of $Y_i $ and let $\Gamma$ be the group of all deck transformations that stabilize $\tilde Z $, so that $\tilde Z/\Gamma $ is identified with a component $Z \subset Y_i $.  By Proposition~\ref {nonewshit} and Lemma~\ref {thickconvexcore}, 
$$(*) \ \ \ \ \ \ hull_\epsilon (\tilde Z, \Gamma) \ \subset \ \CN_K \big(hull(\Gamma) \cup Thin_\epsilon (\Gamma) \cup \tilde Z\big), \ \ \ \ \ \ \ $$
for some constant $K =K(\epsilon,m)$, where the dependence on $m$ is since Proposition \ref {nonewshit} requires a bound on the number of edges of $Z $, which is at most $3m-3 $.

$Thin_\epsilon (\Gamma) $ projects into the $\epsilon $-thin part of $M_\phi$, so its $d_\epsilon $-diameter is zero.   The constant in the statement of the lemma is \emph {supposed} to depend on the $d_\epsilon $-diameter of the components of $Y_i$, one of which is the projection $Z$ of $\tilde Z $ to $M_\phi $.  So, as the union in $(*)$ is coarsely connected (Proposition \ref {nonewshit}) we have reduced the lemma to bounding the $d_\epsilon $-diameter of the projection of $hull (\Gamma) $ to $M_\phi $.

Consider the cover $N =\BH ^ 3/\Gamma$ of $M_\phi $.  As the group $\Gamma $ is isomorphic to the image of $\pi_1 (Z)$ in $\pi_1 S \lhd \pi_1 M_\phi $, it is an infinite index subgroup of a surface group, and therefore is free.  By the Tameness Theorem \cite {Agoltameness,Calegarishrinkwrapping}, $N$  is homeomorphic to the interior of a handlebody.   Canary's covering theorem \cite {Canarycovering} implies that a one ended $3$-manifold whose end is degenerate cannot cover a closed manifold.  So, the convex core $core(N)$, which is the projection of $hull (\Gamma) $ to $N $, is compact.  

Let $\pi: N\longrightarrow M_\phi $ be the covering map.  We want to show that $\pi (core(N))$ has $d_\epsilon $-diameter bounded by a constant depending on $m,\epsilon $ and $diam_\epsilon(Z)$.

\begin {figure}[t]
\centering
\includegraphics {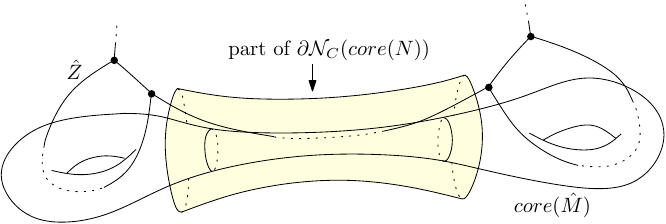}
\caption {The carrier graph $\hat Z$ is forced to track any compressible $\epsilon $-thin parts of $\partial core (N) $.}
\label {core}
\end {figure}
We first show that the projection $\pi (\partial core(N)) $ of its \emph{boundary} has bounded $d_\epsilon $-diameter.  By work of Thurston \cite {Thurstongeometry}, the intrinsic metric on $\partial core(N)$ is hyperbolic.  The components of the $\epsilon $-thick part of $\partial core(N)$ then have bounded diameter, so the same is true of their projections in $M_\phi $.  The $\epsilon $-thin parts of $\partial core (N) $ come in two flavors: either the core curve is essential in $N$ or it is not.  The map $\pi $ sends thin parts of the first type into the $\epsilon $-thin part of $M_\phi $, so their projections have zero $d_\epsilon $-diameter.  Therefore, to bound the $d_\epsilon$-diameter of $\pi (\partial core (N) )$ it suffices to deal with the compressible $\epsilon $-thin parts.

The graph $Z$ lifts homeomorphically to a carrier graph $\hat Z \into N$ with minimal length rel vertices.  By Proposition \ref {stayclose}, we have
$$core (\hat Z) \subset \CN_C (core (N)),$$
where $core (\hat Z) $ is the Stallings core of $\hat Z$ and $C=C(\epsilon ,m) $. 
Let $A \subset \partial core (N)$ be a compressible $\epsilon $-thin part and let $p\in A$.  There is a properly embedded, bounded diameter disk $D \subset \CN_C (core (N))$ with $p \in D$:  the isoperimetric inequality gives such a disk in $ core (N)$ whose boundary passes through $p$, and this can be extended by an annulus from $ \partial core (N)$ to $\partial \CN_C (core (N))$.  As the inclusion $\hat Z \into \CN_C (core (N))$ is $\pi_1 $-surjective, $\hat Z$ must intersect $D$, and therefore passes within bounded distance of $p $ (see Figure~\ref {core}).  Thus, the $d_\epsilon $-diameter of the projection $\pi (A )$ is only boundedly bigger than the $d_\epsilon $-diameter of $Z $.

We now know that $\pi (\partial core(N))$ has bounded $d_\epsilon $-diameter, and we would like to say the same about $\pi (core(N))$.  To do this, we prove:

\begin {claim}
If $f : S \longrightarrow M_\phi$ is an embedding in the homotopy class of the fiber,
$$f(S)\cap \pi (core(N))\neq \emptyset \implies f(S) \cap \pi (\partial core(N))\neq \emptyset.$$
\end {claim}
\begin {proof}
Lift both $\pi (core(N))$ and $f(S)$ to the infinite cyclic cover $\hat M_\phi $ corresponding to $\pi_1 S \lhd \pi_1 M_\phi$, making sure to preserve the intersection. The frontier of the lift of $\pi (core(N))$ is connected, since it is the lift of $\pi (\partial core(N))$. As the lift of $f(S)$ separates $\hat M_\phi $, it must also intersect this frontier.
\end {proof}

By Corollary \ref {levelsurfaces}, for any point $p\in M_\phi$, there is an $f$ whose image $f(S)$ comes arbitrarily close to $p$, and has bounded $d_\epsilon $-diameter. By this and the claim, $\pi (core(N))$ lies in a bounded $d_\epsilon$-neighborhood of $\pi (\partial core(N))$, so the desired $d_\epsilon$-diameter bound transfers from 
 $\pi (\partial core(N))$ to $\pi (core(N))$.
\end {proof}

\textit{\textrm{
\bibliographystyle{amsplain}
\bibliography{biblio}
}}

\end{document}